\documentclass[10pt,letterpaper]{amsart}
\usepackage[utf8]{inputenc}
\usepackage[english]{babel}
\usepackage{subfigure}
\usepackage{amsmath}
\usepackage{amsthm} 
\usepackage{hyperref} 
\usepackage{amsfonts}
\usepackage{tikz-cd}
\usepackage{amssymb}
\usepackage{xcolor}
\usepackage{csquotes}
\usepackage[
backend=biber,
style=alphabetic,
sorting=nty,doi=false,isbn=false, url=false
]{biblatex}

\theoremstyle{plain}
\newtheorem*{theorem*}{Theorem}
\newtheorem{theorem}{Theorem}[section]
\newtheorem{theo}{Theorem}
\theoremstyle{definition}
\newtheorem{prop}[theorem]{Proposition}
\newtheorem{defi}[theorem]{Definition}
\newtheorem{lemma}[theorem]{Lemma}
\newtheorem{example}[theorem]{Example}

\newtheorem*{corollary*}{Corollary}
\newtheorem*{prop*}{Proposition}
\newtheorem*{lemma*}{Lemma}

\newcommand{\C}{\mathbb{C}}
\newcommand{\Z}{\mathbb{Z}}
\newcommand{\Pro}{\mathbb{P}}
\newcommand{\Q}{\mathbb{Q}}
\newcommand{\N}{\mathbb{N}}
\newcommand{\R}{\mathbb{R}}

\newcommand{\eff}{\text{eff}}
\newcommand{\et}{\text{\'et}}

\newcommand{\spc}{\text{Spec}}

\newcommand{\ho}{\text{Hom}}

\newcommand{\CH}{\text{CH}}

\newtheoremstyle{named}{}{}{\itshape}{}{\bfseries}{.}{.5em}{\thmnote{#3's }#1}
\theoremstyle{named}

\addbibresource{references.bib}

\theoremstyle{remark}
\newtheorem{remark}[theorem]{Remark}
\tikzcdset{
  cells={font=\everymath\expandafter{\the\everymath\displaystyle}},
}

\newcommand{\info}{{
  \bigskip
  \footnotesize

  \textsc{Institut de Math\'ematiques de Bourgogne, UMR 5584 CNRS, Universit\'e Bourgogne Franche-Comt\'e, F-21000 Dijon, France}\par\nopagebreak
  \textit{E-mail address}: \texttt{ivan-alejandro.rosas-soto@u-bourgogne.fr}
  }}

\subjclass[2010]{14C25, 14F20, 19E15}
\keywords{Algebraic cycles, \'etale motives, étale cohomology, motivic cohomology}

\usepackage[left=3cm,right=3.5cm,top=2cm,bottom=3.2cm]{geometry}
\author{Iv\'an Rosas-Soto}
\date{17th July 2023}
\title{\'Etale degree map and 0-cycles}
\begin{document}
\maketitle
\begin{abstract}
Using the triangulated category of \'etale motives over a field $k$, for a smooth projective variety $X$ over $k$, we define the group $\text{CH}^\text{\'et}_0(X)$ as an \'etale analogue of 0-cycles. We study the properties of $\text{CH}^\text{\'et}_0(X)$ and give a description of the birational invariance of such a group. We define and present the \'etale degree map using Gysin morphisms in \'etale motivic cohomology and the \'etale index as an analogue to the classical case. We give examples of smooth projective varieties over a field $k$ without zero cycles of degree one but with \'etale zero cycles of degree one, but this property is not always true as we give examples where the \'etale degree map is not surjective.
\end{abstract}
\tableofcontents

\section{Introduction}

Let $X$ be a smooth projective variety over a field $k$. We define the zero cycles of $X$, denoted by $Z_0(X)$, as the free abelian group generated by sums $\sum_x n_x x$ where $x$ is a closed point of $X$ and $n_x \in \Z$ is zero for all but finitely many $x$. The degree map is defined by
\begin{align*}
    \deg:Z_0(X) &\to \Z \\
    \sum_x n_x x &\mapsto \sum_x n_x [k(x):k].
\end{align*}

This map is compatible with the quotient by rational equivalence, so we can define it over $\CH_0(X)$. By definition, it coincides with the push-forward along the structural map $g:X\to \spc(k)$ as $g_*:\CH_0(X)\to \CH_0(\spc(k))=\Z$. We define the index of a variety $X$ over $k$ as follows
\begin{align*}
    I(X):= \gcd\left\{[k(x):k] \ | \ x \in X\right\}.
\end{align*} 

If the field is algebraically closed, then there exists a $k$-rational point and the degree map is surjective. However, if the field is not algebraically closed, the existence of a $k$-rational point, or even of a zero cycle of degree 1, is not guaranteed. Note that the existence of a $k$ rational point implies the existence of a zero cycle of degree 1, but the converse does not always hold. As shown in \cite{C-TM} for $d=2,3,4$ there exist del Pezzo surfaces of degree $d$ over a field of cohomological dimension $1$ which do not have a zero cycle of degree 1. Or, as presented in \cite[Theorem 5.1]{CT}, a hypersurface whose index $I(X)=p$, for a prime $p\geq 5$. 

The study of zero cycles on a smooth projective variety $X$ has played an important role in algebraic geometry. For example, if $C/k$ is a smooth projective curve with a $k$ rational point, then the Chow motive $h(C)$ admits an integral Chow-Künneth decomposition, see \cite[Chapter 2]{MNP}. In general, if $X$ is a smooth projective variety with a zero cycle of degree 1, then the integral Chow motive decomposes as $h(X)=h^0(X)\oplus h^+(X) \oplus h^{2d}(X)$ with $h^0(X)\simeq \mathbb{L}$ and $h^{2d}(X)\simeq \mathbb{L}^d$ where $\mathbb{L}$ is the Lefschetz motive.

Another important fact about zero cycles concerns birational invariance, i.e. if $f:X \to Y$ is a birational map between smooth varieties over a field $k$, then $\CH_0(X)\simeq \CH_0(Y)$. We can say even more: if $f:X \to Y$ is stably birational, i.e. there exist $r,s \in \N$ such that $X\times_k \Pro^r_k\to Y\times_k \Pro^s_k$ is birational, then $\CH_0(X)\simeq \CH_0(Y)$. This gives another tool for studying rationality problems and leads to the notions of $\CH_0$-universal triviality, which in the Bloch-Srinivas case induces a decomposition of the diagonal as in \cite[Proposition 1]{BS}, \cite[Lemma 1.3]{ACTP} and \cite[Proposition 1.4]{CTP}.

For a smooth projective variety $X$ of dimension $d$ over a field $k$ we define the group $\CH_0^L(X)$ as follows:
\begin{align*}
    \CH_0^L(X) := \CH^d_L(X) = H^{2d}_L(X,\Z(d)).
\end{align*}
In the present article we focus on the study of some properties of the group $\CH_0^L(X)$ (or $\CH^\et_0(X)$ after inverting the characteristic of $k$), looking at it with the purpose of obtaining a refinement of classical facts such as the birational invariance property and the existence of Lichtenbaum zero cycles of degree 1 which will induce a decomposition of the diagonal. 

For a smooth projective variety $X$,  we define \textit{\'etale degree map}, denoted by $\deg_\et$, as the push-forward of $g:X\to \spc(k)$ using the category $\text{DM}_\et(k,\Z)$, and the \textit{\'etale index of} $X$ as an analogue of $I(X)$, as follows:
\begin{align*}
    I_\et(X):= \gcd\left\{\deg_\et(\CH_0^\et(X))\cap \Z\right\}.
\end{align*}
The main results of this article concern the existence of smooth and projective varieties $X$ over a field of cohomological dimension $\leq 1$ whose index $I(X)>1$ but $I_\et(X)=1$, as the following theorems show:

\begin{theo}[Theorem \ref{teo1}]
There exists a smooth projective surface $S$ over a field $k$, with $\text{char}(k)=0$ of cohomological dimension $\leq 1$, without zero cycles of degree one but $I_\et(S)=1$.   
\end{theo}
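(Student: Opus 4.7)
The plan is to take $S$ to be one of the del Pezzo surfaces of degree $d\in\{2,3,4\}$ constructed by Colliot-Th\'el\`ene and Madore in \cite{C-TM}: such $S$ is defined over a field $k$ of characteristic zero and cohomological dimension $\leq 1$, is geometrically rational, and has $I(S) > 1$. The surface $S$ therefore has no zero cycle of degree one by construction, and what remains is to prove that $I_\et(S) = 1$, i.e., that the \'etale degree map is surjective onto $\Z$.

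To this end, I would analyze $\CH_0^\et(S) = H^4_\et(S,\Z(2))$ via the Hochschild--Serre spectral sequence
$$E_2^{p, q} = H^p(k, H^q_\et(S_{\bar k}, \Z(2))) \Longrightarrow H^{p+q}_\et(S, \Z(2)).$$
Since $\mathrm{cd}(k)\leq 1$, the $E_2$-page is concentrated in columns $p = 0, 1$, so all outgoing differentials from $E_r^{0,4}$ for $r\geq 2$ land in $E_r^{r,\,4-r+1} = 0$, and all incoming differentials come from $E_r^{-r,\,4+r-1} = 0$. Hence the edge map
$$H^4_\et(S, \Z(2)) \twoheadrightarrow E_\infty^{0,4} = H^0(k, H^4_\et(S_{\bar k}, \Z(2)))$$
is surjective. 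Because $S$ is geometrically rational, $\CH_0(S_{\bar k}) = \Z$ with trivial Galois action, and a standard comparison identifies $H^4_\et(S_{\bar k}, \Z(2))$ with this $\Z$ via the cycle class of any closed point.

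The next step is to identify the edge map with the \'etale degree map $\deg_\et \colon H^4_\et(S, \Z(2)) \to H^0_\et(k, \Z) = \Z$, which follows from compatibility of the Gysin pushforward (constructed earlier in the paper) with base change along $\spc(\bar k) \to \spc(k)$: the geometric pushforward sends the generator of $\CH_0(S_{\bar k}) = \Z$ to $1$, so composing this identification with the edge map recovers $\deg_\et$ up to sign. Surjectivity of the edge map therefore forces $I_\et(S) = 1$, while $I(S) > 1$ comes for free from \cite{C-TM}.

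The main technical obstacle I foresee is the rigorous identification of the Hochschild--Serre edge map with the \'etale degree map: this is a compatibility between the Gysin pushforward in \'etale motivic cohomology and the Leray spectral sequence for the structural morphism $S \to \spc(k)$. It should follow from the functoriality of the Gysin morphisms constructed earlier in the paper together with proper base change in $\mathrm{DM}_\et$, but requires careful bookkeeping through the six-functor formalism to verify cleanly. A secondary, milder point is to confirm that for the specific del Pezzo surface chosen the relevant hypotheses on $k$ (characteristic zero and $\mathrm{cd}(k)\leq 1$) are indeed realized by the field appearing in the construction of \cite{C-TM}, which is immediate from their setup.
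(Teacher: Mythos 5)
Your proposal reaches the same conclusion as the paper but by a somewhat more special route. The paper proves Theorem \ref{teo1} as a corollary of Proposition \ref{propo}: for $\text{cd}(k)\leq 1$ the Hochschild--Serre spectral sequence gives a surjection $\CH^2_L(S)\twoheadrightarrow \CH^2_L(S_{\bar k})^{G_k}$, and the surjectivity of $\CH^2_L(S_{\bar k})^{G_k}\to\Z$ is deduced from $H^1(G_k,A_0(S_{\bar k}))=0$, which in turn follows from $A_0(S_{\bar k})$ being uniquely divisible (Roitman's theorem, since $\mathrm{Alb}(S_{\bar k})=0$ for a del Pezzo surface). You shortcut this by using that the Colliot-Th\'el\`ene--Madore surfaces are geometrically rational, so $\CH_0(S_{\bar k})\simeq\Z$ with trivial Galois action and $A_0(S_{\bar k})=0$; that is legitimate and simpler for this particular theorem, but it is strictly less general than Proposition \ref{propo} (which the paper reuses for the non-rational hypersurfaces of Theorem \ref{teo2}). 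Your ``main technical obstacle,'' identifying the edge map with $\deg_\et$, is exactly Proposition \ref{rem} of the paper: $\deg_\et$ factors through $E^{0,2d}_\infty\hookrightarrow \CH^d(X_{\bar k})^{G_k}\xrightarrow{\deg}\Z$, so no extra six-functor bookkeeping is needed beyond what is already established.

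One step of your argument is stated incorrectly and needs repair, though the conclusion you draw from it survives. It is \emph{not} true that $\mathrm{cd}(k)\leq 1$ forces the $E_2$-page to be concentrated in columns $p=0,1$: cohomological dimension bounds only kill $H^p(G_k,M)$ for $p\geq 2$ when $M$ is torsion (or torsion plus uniquely divisible), and the row $q=4$ has coefficients $\CH_0(S_{\bar k})\simeq\Z$, for which e.g. $H^2(\hat{\Z},\Z)\simeq\Q/\Z\neq 0$. What you actually need is only the vanishing of the targets $E_r^{r,5-r}$, $r\geq 2$, of the differentials leaving $E_r^{0,4}$; these involve rows $q\leq 3$, where $H^q_L(S_{\bar k},\Z(2))$ is a direct sum of a uniquely divisible group and a torsion group by the structure results recalled in Section 2 of the paper, and then $p\geq 2>\mathrm{cd}(k)$ does give vanishing. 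With that correction (which is exactly how the paper argues that $E^{0,2d}_\infty=E^{0,2d}_2$ when $\mathrm{cd}(k)\leq 1$), your proof goes through.
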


\begin{theo}[Theorem \ref{teo2}]
    For each prime $p\geq 5$ there exists a field $k$ such that $\text{char}(k)=0$ with $\text{cd}(k)=1$ and a smooth projective hypersurface $X \subset \Pro^p_k$ with $I_\et(X)=1$ but index $I(X)=p$.
\end{theo}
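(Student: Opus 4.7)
The natural plan is to adopt as $X$ the smooth projective hypersurface constructed by Colliot-Th\'el\`ene in \cite[Theorem 5.1]{CT}, over the field $k$ of characteristic zero and cohomological dimension one that appears in the same reference. This immediately takes care of the ordinary index condition $I(X)=p$, the hypersurface structure $X\subset\Pro^p_k$, and the hypotheses on the base field, so that the entire content of the theorem reduces to the assertion $I_\et(X)=1$.

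Since $\deg(\CH_0(X))=p\Z$, the identity $I_\et(X)=1$ is equivalent to exhibiting a class in $\CH_0^\et(X)=H^{2d}_L(X,\Z(d))$, with $d=p-1$, whose \'etale degree is coprime to $p$. To find such a class I would work modulo $p$: by the Beilinson–Lichtenbaum theorem one has $H^{2d}_L(X,\Z/p(d))\simeq H^{2d}_\et(X,\mu_p^{\otimes d})$, and the mod-$p$ \'etale degree map factors through the Hochschild–Serre edge map
\begin{equation*}
H^{2d}_\et(X,\mu_p^{\otimes d})\twoheadrightarrow H^{2d}(X_{\bar k},\mu_p^{\otimes d})^{G_k}\simeq\Z/p,
\end{equation*}
which is surjective because the hypothesis $\mathrm{cd}(k)\leq 1$ forces $E_2^{a,b}=0$ for $a\geq 2$ in the Hochschild–Serre spectral sequence. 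The generator of the right-hand side is the class of any geometric point, whose \'etale push-forward is $1\in\Z/p$, so any preimage $\alpha\in H^{2d}_\et(X,\mu_p^{\otimes d})$ has mod-$p$ \'etale degree equal to one.

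To promote $\alpha$ to an integral element of $\CH_0^\et(X)$ I would use the Bockstein long exact sequence attached to $\Z(d)\xrightarrow{\cdot p}\Z(d)\to \Z/p(d)$, which gives an injection $\CH_0^\et(X)/p\hookrightarrow H^{2d}_\et(X,\mu_p^{\otimes d})$ with image $\ker(\beta)$, where $\beta$ is the Bockstein $H^{2d}_\et(X,\mu_p^{\otimes d})\to H^{2d+1}_L(X,\Z(d))$. It therefore suffices to find some $\alpha$ of mod-$p$ degree one lying in $\ker(\beta)$. The \'etale push-forward of $\beta(\alpha)$ must vanish in $H^1_L(\spc(k),\Z)=0$, so $\beta(\alpha)$ lies in the kernel of the degree map $H^{2d+1}_L(X,\Z(d))\to H^1_L(\spc(k),\Z)$; the strategy is then to use the Hochschild–Serre filtrations on both groups, together with $\mathrm{cd}(k)\leq 1$, to correct $\alpha$ by an element of the kernel of the mod-$p$ degree whose Bockstein cancels $\beta(\alpha)$.

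The main obstacle is precisely this last correction: one must show that the image of $\beta$ restricted to the kernel of the mod-$p$ degree already surjects onto the kernel of the push-forward $H^{2d+1}_L(X,\Z(d))\to H^1_L(\spc(k),\Z)$, or at least contains $\beta(\alpha)$. I expect this to follow the pattern established in Theorem \ref{teo1}: a careful diagram chase comparing Beilinson–Lichtenbaum in degrees $2d$ and $2d+1$ with the Gysin push-forward, combined with the vanishing of $H^a(k,-)$ for $a\geq 2$ on all torsion coefficients, should force the obstruction to vanish and yield $I_\et(X)=1$.
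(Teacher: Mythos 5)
Your reduction is set up correctly: taking the Colliot-Th\'el\`ene hypersurface over the field of cohomological dimension one settles $I(X)=p$, and the mod-$p$ part of your argument (the edge map $H^{2d}_\et(X,\mu_p^{\otimes d})\to H^{2d}_\et(X_{\bar k},\mu_p^{\otimes d})^{G_k}\simeq\Z/p$ is surjective because $\mathrm{cd}(k)\leq 1$ kills $E_2^{a,b}$ for $a\geq 2$ on torsion coefficients) is sound. But the decisive step — promoting a mod-$p$ class of degree one to an integral class of degree prime to $p$ — is exactly where your proof stops being a proof. Note that ``there exists $\alpha\in\ker(\beta)$ of mod-$p$ degree one'' is the same as saying that the composite $\CH^d_\et(X)/p\hookrightarrow H^{2d}_L(X,\Z/p(d))\to\Z/p$ is nonzero, i.e.\ that $\deg_\et(\CH^d_\et(X))\not\subseteq p\Z$ — which is the statement to be proven. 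So the hoped-for ``careful diagram chase using only $\mathrm{cd}(k)\leq 1$'' cannot exist in that generality: the obstruction lives in $H^1(G_k,A_0(X_{\bar k}))$ (equivalently, in the Bockstein target you identify), and for a divisible but not uniquely divisible $A_0(X_{\bar k})$ this group need not vanish even over a field of cohomological dimension one. This is precisely why the paper's Proposition \ref{propo} carries the extra hypothesis $H^{2d-1}_\et(X_{\bar k},\Q_\ell/\Z_\ell(d))=0$.

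The missing ingredient is geometric and specific to hypersurfaces: since $X\subset\Pro^p_k$ is smooth of dimension $d=p-1\geq 4$, the weak Lefschetz theorem gives $H^{2d-1}_\et(X_{\bar k},\mu_{\ell^r}^{\otimes d})\simeq H^{2d+1}_\et(\Pro^p_{\bar k},\mu_{\ell^r}^{\otimes d+1})=0$ for all $\ell\neq\mathrm{char}(k)$, hence $H^{2d-1}_\et(X_{\bar k},\Q_\ell/\Z_\ell(d))=0$. This forces $\CH^d_L(X_{\bar k})_{\mathrm{tors}}=0$, so $A_0(X_{\bar k})$ is uniquely divisible, $H^1(G_k,A_0(X_{\bar k}))=0$, and the integral degree map $\CH^d_L(X)\to\CH^d_L(X_{\bar k})^{G_k}\to\Z$ is surjective (Proposition \ref{propo}); no mod-$p$ lifting argument is then needed. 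To repair your write-up you must insert this Lefschetz vanishing (or an equivalent statement such as the triviality of the Albanese of $X_{\bar k}$ plus Roitman's theorem, as in Theorem \ref{teo1}) as the input that kills the Bockstein obstruction; as written, the correction step is an unproven claim equivalent to the theorem itself.
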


To find this kind of varieties, we use the Proposition \ref{propo} which characterises some smooth varieties $X$ over a field $k$ of cohomological dimension $\leq 1$, those such that $\text{Alb}(X_{\bar{k}})_\text{tors}=0$, whose \'etale degree map is surjective. The proof is based on the fact that the condition $\text{Alb}(X_{\bar{k}})_\text{tors}=0$ implies that $\CH_0^L(X_{\bar{k}})_{\text{hom}}$ is uniquely divisible, i.e. with trivial Galois cohomology in positive degrees. After this, we note that the varieties presented in \cite[Th\'eor\`eme 1.1]{C-TM}, \cite[Th\'eor\`eme 1.2]{C-TM} and \cite[Theorem 5.1]{CT}  satisfy the hypothesis of Proposition \ref{propo}.

These results give us the first refinement for the existence of $h_\et(X)=h_\et^0(X)\oplus h_\et^+(X)\oplus h_\et^{2d}(X)$ in the category of integral \'etale motives but not in the category of integral Chow motives. Despite this new refinement of the index of a smooth projective variety, we give an example of how the property $I_\et(X)=1$ is not always achieved. For Severi-Brauer varieties $X$ we show that $I_\et(X)$ is greater than or equal to the order of the class $[X] \in \text{Br}(k)$ as follows:
\begin{theo}[Theorem \ref{teoSV}]
    Let $X$ be a Severi-Brauer variety of dimension $d$ over a field $k$. Then the image of $\deg_\et:CH^d_\et(X)\to \Z$ is isomorphic to a subgroup of $\text{Pic}(X)$ and in particular $I_\et(X)\geq \text{ord}([X])$ where $[X]$ is the Brauer class of $X$ in $\text{Br}(k)$. Moreover, if $\text{cd}(k)\leq 4$, then the group $\CH_0^L(X)$  fits in the following exact sequence
    \begin{align*}
         0 \to E^{3,2d-1}_\infty(d) \to \CH^d_L(X) \to E^{0,2d}_\infty(d) \to 0.
    \end{align*}
with $E^{0,2d}_\infty(d)= \text{ker}\left\{\CH^d_\et(\mathbb{P}^d_{\bar{k}})^{G_k}\to \text{Br}(k)\right\}$ and in particular $I_\et(X)=\text{ord}([X])$.
\end{theo}

We then prove that this bound also holds for the product of Severi-Brauer varieties. To prove this, we give the following generalisation of \cite[Theorem 5.4.10]{GS06}:

\begin{lemma*}[Lemma \ref{lemBr}]
    Let $X$ be a Severi-Brauer variety of dimension $d$ over a field $k$. For the product $X^{\times n}:=\overbrace{X\times \ldots \times X}^{n-\text{times}}$ we then obtain an exact sequence
 \begin{align*}
    0 \to \text{Pic}(X^{\times n}) \to \text{Pic}(\Pro^d_{\bar{k}} \times \ldots \times \Pro^d_{\bar{k}})^{G_k} \simeq  \Z\oplus \ldots \oplus \Z \xrightarrow{s} \text{Br}(k) \to \text{Br}(X^{\times n})
\end{align*}
where $s$ sends $(a_1,\ldots,a_n) \mapsto \sum_{i=1}^n a_i\left[X\right] \in \text{Br}(k)$.
\end{lemma*}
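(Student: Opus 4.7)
I would model the proof on the $n=1$ case, which is Theorem~5.4.10 of \cite{GS06}. The main tool is the Hochschild–Serre spectral sequence for étale $\mathbb{G}_m$-cohomology associated with the Galois cover $X^{\times n}_{\bar{k}}\to X^{\times n}$:
\begin{align*}
    E_2^{p,q}=H^p\bigl(G_k,H^q_{\et}(X^{\times n}_{\bar{k}},\mathbb{G}_m)\bigr)\Rightarrow H^{p+q}_\et(X^{\times n},\mathbb{G}_m).
\end{align*}
Since $X^{\times n}$ is geometrically connected and proper over $k$, one has $H^0_\et(X^{\times n}_{\bar{k}},\mathbb{G}_m)=\bar{k}^{\ast}$, and Hilbert~90 gives $H^1(G_k,\bar{k}^{\ast})=0$. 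The five-term exact sequence of low degree then collapses to
\begin{align*}
    0\to \text{Pic}(X^{\times n})\to \text{Pic}(X^{\times n}_{\bar{k}})^{G_k}\xrightarrow{s} \text{Br}(k)\to \text{Br}(X^{\times n}),
\end{align*}
which is the sought-after sequence once the middle term and the transgression $s=d_2^{0,1}$ are identified.

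For the middle term, over $\bar{k}$ one has $X_{\bar{k}}\simeq \Pro^d_{\bar{k}}$, so the iterated projective bundle formula yields $\text{Pic}(X^{\times n}_{\bar{k}})\simeq \Z^n$, with basis the hyperplane classes $h_1,\ldots,h_n$ pulled back from the $n$ factors. The standard fact that the $G_k$-action on $\text{Pic}(X_{\bar{k}})=\Z$ is trivial for a Severi-Brauer variety, combined with the $G_k$-equivariance of the Künneth decomposition (which holds because the projections are $k$-morphisms), shows that the action on $\Z^n$ is trivial, hence $\text{Pic}(X^{\times n}_{\bar{k}})^{G_k}=\Z^n$.

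To identify $s$, I would appeal to the naturality of Hochschild–Serre with respect to each projection $p_i\colon X^{\times n}\to X$. This produces a commutative square
\begin{align*}
    \begin{tikzcd}
        \text{Pic}(X_{\bar{k}})^{G_k} \arrow[r,"s_1"] \arrow[d,"p_i^{\ast}"'] & \text{Br}(k) \arrow[d,"\mathrm{id}"] \\
        \text{Pic}(X^{\times n}_{\bar{k}})^{G_k} \arrow[r,"s"] & \text{Br}(k)
    \end{tikzcd}
\end{align*}
in which $p_i^{\ast}(1)=h_i$ and the top row is the $n=1$ transgression, which sends $1$ to $[X]$ by Theorem~5.4.10 of \cite{GS06}. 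It follows that $s(h_i)=[X]$ for every $i$, so $\Z$-linearity yields $s(a_1,\ldots,a_n)=\sum_{i=1}^n a_i[X]$, as required. The main obstacle is the careful bookkeeping of signs and conventions in the transgression and its naturality: once $s_1(1)=+[X]$ is established on the nose, the product case becomes purely formal. Nothing else in the argument goes beyond the techniques already used for a single Severi-Brauer variety.
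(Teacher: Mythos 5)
Your proposal is correct and follows essentially the same route as the paper: a low-degree exact sequence from the Hochschild--Serre spectral sequence (the paper uses the weight-one Lichtenbaum version, which is the $\mathbb{G}_m$ sequence up to shift), triviality of the $G_k$-action on $\text{Pic}((\Pro^d_{\bar{k}})^{\times n})\simeq\Z^n$, and naturality with respect to the projections $\text{pr}_i$ to pull the identification of the transgression back to the known $n=1$ case of \cite[Theorem 5.4.10]{GS06}. The only cosmetic difference is that you identify the map on $H^2(G_k,\bar{k}^*)$ directly as the identity, whereas the paper additionally invokes the diagonal $\Delta$ to argue injectivity of the comparison maps; both yield $s(a_1,\ldots,a_n)=\sum_i a_i[X]$.
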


With this lemma, we can state and prove the following result for a product of Severi-Brauer varieties:

\begin{theo}[Theorem \ref{teo}]
    Let $k$ be a field and let $X$ be a Severi-Brauer variety over $k$ of dimension $d$. Then $I_\et(X^{\times n}) \geq I_\et(X) \geq \text{ord}([X])$.
\end{theo}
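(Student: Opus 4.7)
The second inequality $I_\et(X) \geq \text{ord}([X])$ is exactly Theorem \ref{teoSV}, so the substantive content lies in establishing $I_\et(X^{\times n}) \geq I_\et(X)$.

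My plan is to prove this by functoriality of pushforward in $\text{DM}_\et(k)$. Let $p : X^{\times n} \to X$ be the projection onto any factor, say the first; this is a smooth proper morphism. The Gysin formalism used in the earlier sections of the paper to construct $\deg_\et$ produces a pushforward
\[
p_* : \CH_0^\et(X^{\times n}) \longrightarrow \CH_0^\et(X).
\]
Writing $g : X \to \spc(k)$ for the structural morphism of $X$, the composite $g \circ p$ is the structural morphism of $X^{\times n}$, and compatibility of pushforward with composition yields
\[
\deg_\et^{X^{\times n}} \ =\ g_* \circ p_* \ =\ \deg_\et^X \circ p_*.
\]
The image of $\deg_\et^{X^{\times n}}$ is therefore contained in the image of $\deg_\et^X$, which equals $I_\et(X) \cdot \Z$; hence every integer arising as an \'etale degree on $X^{\times n}$ is divisible by $I_\et(X)$, giving $I_\et(X) \mid I_\et(X^{\times n})$ and thus $I_\et(X^{\times n}) \geq I_\et(X)$. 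Concatenating with Theorem \ref{teoSV} gives the full chain.

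The essentially only technical point is that Gysin morphisms are functorial along compositions of smooth proper morphisms in $\text{DM}_\et(k)$. This is built into the setup that defines the \'etale degree map in the preceding sections, so it is available off the shelf; hence I would not expect a genuine obstacle, only the need to verify that the chosen pushforward indeed factors the structural pushforward as required. The complementary Lemma \ref{lemBr} is not logically needed for this divisibility argument, but it can be used in a parallel computation (via the Picard-theoretic route of Theorem \ref{teoSV}) to recover the weaker bound $I_\et(X^{\times n}) \geq \text{ord}([X])$: the image of $\deg_\et^{X^{\times n}}$ maps into $\text{Pic}(X^{\times n})$, and composition with the summation $\bigoplus_i \Z \to \Z$ lands in $\text{ord}([X]) \cdot \Z$ precisely by the lemma.
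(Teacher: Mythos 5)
Your proposal is correct, but it follows a genuinely different route from the paper. The paper proves the theorem by running the Hochschild--Serre spectral sequence for $H^{2nd}_L(X^{\times n},\Z(nd))$: it computes $H^{2nd-1}_L((\Pro^d_{\bar{k}})^{\times n},\Z(nd))\simeq\bigoplus_{i=1}^n\bar{k}^*$, hence $E^{2,2nd-1}_2\simeq\bigoplus_{i=1}^n\text{Br}(k)$, and then identifies the differential $d_2^{0,2nd}$ by cupping with the class $\delta=c_1(\mathcal{O}(1))^{nd-1}$ and comparing with the $n=1$ weight-one sequence via Lemma \ref{lemBr}; this yields $E^{0,2nd}_3=\ker(g)=\text{ord}([X])\Z$, and since $\deg_\et$ factors through $E^{0,2nd}_\infty\subset E^{0,2nd}_3$ (Proposition \ref{rem}), the image of $\deg_\et$ on $X^{\times n}$ lies in $\text{ord}([X])\Z$. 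You instead bypass the spectral sequence and Lemma \ref{lemBr} entirely, using the projection $p:X^{\times n}\to X$ and compatibility of Gysin push-forwards with composition in $\text{DM}_\et(k)$ to factor $\deg_\et^{X^{\times n}}=\deg_\et^X\circ p_*$; this functoriality is indeed available from the Gysin formalism the paper already invokes (the cited works of D\'eglise), and $p$ is a projective morphism of smooth varieties, so the push-forward the paper states for such morphisms applies with $c=(n-1)d$. Your argument is more elementary and more general (it gives $I_\et(X\times Y)\geq I_\et(X)$ for any smooth projective factors), and it actually delivers the first inequality $I_\et(X^{\times n})\geq I_\et(X)$ literally, whereas the paper's computation directly produces only the bound by $\text{ord}([X])$; on the other hand, the paper's spectral-sequence computation buys finer structural information (the explicit identification $E^{0,2nd}_3=\text{ord}([X])\Z$ and the $E^{2,2nd-1}_2$-term), which is reused in the subsequent proposition to prove the equality $I_\et(C\times C)=\text{ord}([C])$. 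One small point of precision: in positive characteristic the image of $\deg_\et^X$ is a subgroup of $\Z[1/p]$ rather than of $\Z$, so one should phrase the conclusion as $\text{im}(\deg_\et^{X^{\times n}})\cap\Z\subseteq\text{im}(\deg_\et^X)\cap\Z$ and take gcd's, which gives $I_\et(X)\mid I_\et(X^{\times n})$ exactly as you intend.
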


With this goal in mind, we will start by recalling some properties of Lichtenbaum's \'etale motivic cohomology, giving in Lemma \ref{lemvan} vanishing results depending on the \'etale cohomological dimension of the variety in question. Then we continue by giving in Lemma \ref{proHS} the Hochschild-Serre spectral sequence for Lichtenbaum's cohomology, for example, for a finite \'etale morphism $f:Y \to X$ with Galois group $G$ and for each degree $n\in \Z$ we have a spectral sequence
$$E^{r,s}_2(n) = H^r(G,H_L^s(Y,\Z(n)))\Longrightarrow H^{r+s}_L(X,\Z(n)).$$
In addition, in Lemma 2.8 we give a proof of classical formulae, such as those for projective bundles, smooth centre blow-up and varieties admitting cellular decomposition for \'etale motivic and Lichtenbaum cohomology. These formulae and the vanishing lemma for Lichtenbaum cohomology play an important role in the study of the birationality properties of the zero cycles of Lichtenbaum cohomology, giving counterexamples where stability and birationality fail for $\CH_0^\et(X)$.

Then we define the \'etale degree map, denoted $\deg_\et$, as the push-forward of the structural morphism $g: X\to \spc(k)$, in other words $\deg_\et:\CH_0^\et(X) \to \Z[1/p]$, where $p$ is the exponential characteristic of the base field $k$. One of the main tools we use to obtain the results is that $\deg_\et$ factors through the term $E^{0,2d}_\infty(d)$ given by the Hochschild-Serre spectral sequence. This is quite important to give a description of the nature of $\CH_0^L(X)$ since $E^{0,2d}_\infty(d)\hookrightarrow E^{0,2d}_2(d)=\CH^d(X_{\bar{k}})[1/p]^{G_k}$, in other words it is a subgroup of the zero cycles in $X_{\bar{k}}$ which are Galois invariant. 

This article is organised as follows: in section 2 we present the preliminaries. In the beginning, we present the definition of profinite cohomology and a special case of it, as \textit{Galois cohomology}. We start by defining \'etale motivic and Lichtenbaum cohomology as an analogue of motivic cohomology, and state some results that we will use throughout the present article. Then we construct the Hochschild-Serre spectral sequence for Lichtenbaum cohomology. After that we discuss and recall the formulas of a projective vector bundle, blow-up with smooth center and varieties $X$ which admit cellular decomposition, for motivic, \'etale motivic and Lichtenbaum cohomology.

Section 3 is devoted to the discussion of birational invariance of the $\CH_0^L(X)$ presenting cases where it is known to be a birational invariant and showing that this is not always the case for some non-algebraically closed fields. 

In Sections 4 and 5 we study the group of zero cycles for \'etale Chow groups. In Section 4 we give the definition of the \'etale degree map and relate the filtration on $\CH_0^\et(X)$ induced by the Hochschild-Serre spectral sequence and its factorisation by the term $E^{0,2d}_\infty(d)$, i.e. a subgroup of the fixed points of the Galois action on $\CH_0(X_{\bar{k}})$.

Section 5 deals with the examples of varieties $X$ over a field $k$ with $I_\et(X)=1$ but $I(X)>1$.  For this, we present Proposition \ref{propo}, which is a general statement characterising some of the varieties $X$ whose \'etale degree is equal to 1. We will then apply Proposition \ref{propo} to the examples given in  
\cite{C-TM} and \cite[Theorem 5.1]{CT} to obtain Theorem \ref{teo1} and \ref{teo2}. Then we move to varieties $X$ which do not have $I_\et(X)$ equals to one. We give the bound for such \'etale degree of Severi-Brauer variety as in Theorem \ref{teoSV} and then we continue with the product of a Severi-Brauer varieties. To achieve this, we present the generalisation lemma \ref{lemBr} and then in Theorem \ref{teo} we obtain the bound for $I_\et(X^n)$.

\section*{Conventions}

Let $k$ be a field, we denote as $k^{\text{sep}}$ and $\Bar{k}$ the separable and algebraic closure of $k$ respectively. For a prime number $\ell$, we denote the $\ell-$cohomological dimension of $k$ as $\text{cd}_\ell(k)$, and we set the cohomological dimension of  $k$ to be $\text{cd}(k):=\sup_{\ell}\left\{\text{cd}_\ell(k)\right\}$. Let $G$ be an abelian group, $\ell$ a prime number and $r\geq 1$, then we denote  $G[\ell^r]:=\left\{g \in G \ | \ \ell^r\cdot g =0 \right\}$, $G\{\ell\}:=\bigcup_{r} G[\ell^r]$, $G_\text{tors}$ denotes the torsion subgroup of $G$. Continuing with the same hypothesis for $G$, for an integer $p$, we set $G[1/p]:=G \otimes_{\Z} \Z[1/p]$. The prefix ``L-" indicates the respective version of some result, conjecture, group, etc... in the Lichtenbaum setting. If now $G$ is a profinite group, i.e. can be written as $G=\varprojlim G_i$ with $G_i$ finite groups, and $A$ is a $G$-module we will consider its cohomology group $H^j(G,A)$ as the continuous cohomology group of $G$ with coefficients in $A$ defined as $H^j(G,A):=\varinjlim H^j(G_i,A^{H_i})$ with $H_i$ running over the open normal subgroups of $G$ such that $G/H_i \simeq G_i$. $\text{Sm}_k$ will denote the category of smooth schemes over $k$ and $X_\et$ denotes the small \'etale site of $X$.

\section*{Acknowledgements}

The author thanks his advisors Fr\'ed\'eric D\'eglise and Johannes Nagel for their suggestions, useful discussions and time spent reading this article. He also thanks Jean-Louis Colliot-Thélène for pointing out the reference \cite{ELW} and the example given in Remark \ref{examC}.(2). Finally, he would like to
thank the referee for useful comments. This work was supported by the EIPHI Graduate School (contract ANR-17-EURE-0002) and the FEDER/EUR-EiPhi Project EITAG. We thank the French “Investissements d’Avenir” project ISITE-BFC (ANR-15-IDEX-0008) and the French ANR project “HQ-DIAG” (ANR-21-CE40-0015). 
 
\section{Preliminaries}

When considering a profinite group $G$, defined as the inverse limit of finite groups $G_i$, it is useful to recall a fact about continuous cohomology with coefficients in a uniquely divisible module. This fact is a direct consequence of \cite[Proposition 6.1.10]{Wei}, and will be used several times throughout this article.

\begin{lemma}
    Let $G$ be a profinite commutative group and let $A$ be a $G$-module which is uniquely divisible. Then $H^n(G,A)=0$ for all $n\geq 1$.
\end{lemma}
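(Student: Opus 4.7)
The plan is to reduce to the case of a finite group via the colimit definition $H^n(G,A)=\varinjlim_i H^n(G/H_i, A^{H_i})$ given in the conventions, and then invoke the classical vanishing of group cohomology of a finite group with $\Q$-vector space coefficients. Since a filtered colimit of zero groups is zero, this yields the result.

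First I would check that, for each open normal subgroup $H_i$ of $G$, the submodule $A^{H_i}$ inherits unique divisibility. Given $a \in A^{H_i}$ and an integer $m \geq 1$, unique divisibility of $A$ produces a unique $b \in A$ with $mb=a$; for any $h \in H_i$ the element $hb$ also satisfies $m(hb)=h(mb)=ha=a$, so by uniqueness $hb=b$, i.e.\ $b \in A^{H_i}$. Hence $A^{H_i}$ carries a canonical $\Q$-vector space structure compatible with the action of the finite quotient $G_i:=G/H_i$, making it a $\Q[G_i]$-module.

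For the finite-group step, the usual composition of corestriction with restriction to the trivial subgroup shows that $H^n(G_i, A^{H_i})$ is annihilated by $|G_i|$; on the other hand the $\Q[G_i]$-module structure (visible for instance from the bar resolution) forces the same cohomology group to be a $\Q$-vector space. A $\Q$-vector space killed by a nonzero integer must vanish, so $H^n(G_i, A^{H_i})=0$ for all $n\geq 1$ and all $i$, and passing to the colimit gives the statement. The only subtle point is the verification that $A^{H_i}$ remains uniquely divisible---this is exactly where the uniqueness part of the hypothesis is used, since arbitrary submodules of a merely divisible module need not themselves be divisible; beyond this, the argument is entirely formal and I do not anticipate any further obstacle.
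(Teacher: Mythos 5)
Your proposal is correct and follows essentially the same route as the paper: reduce to the finite quotients $G/H_i$ via the colimit definition of continuous cohomology, then kill $H^n(G/H_i,A^{H_i})$ for $n\geq 1$ because it is simultaneously a $\Q$-vector space and annihilated by $|G/H_i|$ (the paper simply cites \cite[Proposition 6.1.10]{Wei} for this finite-group step instead of running the restriction--corestriction argument). Your explicit verification that $A^{H_i}$ inherits unique divisibility is a detail the paper leaves implicit, but it is the same proof.
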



Let $k$ be a field, fix a separable closure denoted by $k^{\text{sep}}$ and denote by $G_k$ its Galois group. Our main interest is to study the cohomology of the group $G_k$. For a finite Galois extension $K/k$ we denote by $\text{Gal}(K/k)$ the Galois group of $K$ and recall that $G_k\simeq \varprojlim \text{Gal}(K/k)$ where $K$ runs through the finite Galois extensions of $k$, so it is a profinite group. The importance of this fact throughout the paper is reflected in the relationship between Galois cohomology and Lichtenbaum cohomology groups through a Hoschschild-Serre spectral sequence. 

Let us now give a brief overview of \'etale motivic cohomology and its most used properties in this article. In this subsection we will use the category of \'etale motives, since we won't mention much more details about the construction and/or functorial behaviour of the category; for more details about these properties we refer the reader to \cite{Ayo} and \cite{CD16}. Let $k$ be a perfect field and $R$ be a commutative ring. We denote the category of effective motivic \'etale sheaves with coefficients in $R$ over the field $k$ by $\text{DM}^{\eff}_\et(k,R)$, and if we invert the Lefschetz motive, we get the category of motivic \'etale sheaves with coefficients in $R$ denoted by $\text{DM}_\et(k,R)$. One defines the \textbf{\'etale motivic cohomology} group of bi-degree $(m,n)$ with coefficients in a commutative ring $R$ as
\begin{align*}
H_{M,\et}^m(X,R(n)):= \ho_{\text{DM}_\et(k,R)}(M_\et(X),R(n)[m]).
\end{align*}
where $M_\et(X)=\rho^* M(X)$ with $\rho$ is the canonical map associated to the change of topology $\rho:\left(\text{Sm}_k\right)_\et\to \left(\text{Sm}_k\right)_{\text{Nis}}$ which induces an adjunction $\rho^*:= \mathbf{L}\rho^*:\text{DM}(k,\Z)\rightleftarrows  \text{DM}_\et(k,\Z):\mathbf{R}\rho_*=:\rho_*$. In particular we define the \textbf{\'etale Chow groups} of codimension $n$ as the \'etale motivic cohomology in bi-degree $(2n,n)$ with coefficients in $\Z$, i.e. 
\begin{align*}
\text{CH}_\et^n(X):&=H_{M,\et}^{2n}(X,\Z(n))\\
&=\ho_{\text{DM}_\et(k,\Z)}(M_\et(X),\Z(n)[2n]).
\end{align*}

\begin{remark}
\begin{enumerate}
    \item Let $k$ be a field, let $\ell$ be a prime number different from the characteristic of $k$ and let $r\in \N$. By the rigidity theorem for torsion motives, see \cite[Theorem 4.5.2]{CD16}, we have an isomorphism 
    $$H_{M,\et}^m(X,\Z/\ell^r(n))\simeq H_\et^m(X,\mu_{\ell^r}^{\otimes n}).$$
    \item Note that $M_\et(X)$ (and also $M(X)$) can be defined even if $X$ is singular, but for simplicity in this paper we consider $X$ to be smooth.
\end{enumerate}
\end{remark}

We consider a second notion of the \'etale version of Chow groups, namely the well-known Lichtenbaum cohomology groups, groups defined by the hypercohomology of the \'etale sheafification of Bloch's complex sheaf. These groups are characterised by Rosenschon and Srinivas in \cite{RS} using \'etale hypercoverings. In this context we consider $\text{Sm}_k$ as the category of smooth separated $k$-schemes over a field $k$. We denote by $z^n(X,\bullet)$ the cycle complex of abelian groups defined by Bloch 
\begin{align*}
   z^n(X,\bullet): \cdots \to z^n(X,i) \to \cdots \to z^n(X,1)\to z^n(X,0) \to 0 
\end{align*}
where the differentials are given by the alternating sum of the pullbacks of the face maps and their homology groups define the higher Chow groups $\text{CH}^n(X,m)=H_m(z^n(X,\bullet))$.

Let us recall that  $z^n(X,i)$ and the complex $z^n(X,\bullet)$ are covariant functorial for proper maps and contravariant functorial for flat morphisms between smooth $k$-schemes, see \cite[Proposition 1.3]{Blo10}, therefore for a topology $t \in \left\{\text{fppf}, \ \et, \ \text{Nis},  \ \text{Zar} \right\}$ we have a complex of $t$-presheaves
$z^n(-,\bullet):U \mapsto z^n(U,\bullet)$. In particular the presheaf $z^n(-,i):U \mapsto z^n(U,i)$ is a sheaf for $t \in \left\{\text{fppf}, \ \et, \ \text{Nis},  \ \text{Zar} \right\}$, see \cite[Lemma 3.1]{Ge04}, and then $z^n(-,\bullet)$ is a complex of sheaves for the small \'etale, Nisnevich and Zariski sites of $X$. We set the complex of $t$-sheaves 
\begin{align*}
R_X(n)_t = \left(z^n(-,\bullet)_t \otimes R \right)[-2n]
\end{align*}
where $R$ is an abelian group and for our purposes we just consider $t= \text{Zar}$ or $\et$ and then we compute the hypercohomology groups $\mathbb{H}^m_t(X,R_X(n)_t)$. For example, setting $t=$ Zar and $R=\Z$ the hypercohomology of the complex allows us to recover the higher Chow groups $\text{CH}^n(X,2n-m)\simeq \mathbb{H}_{\text{Zar}}^m(X,\Z(n))$. We denote the motivic and Lichtenbaum cohomology groups with coefficients in $R$ as
\begin{align*}
H_M^m(X,R(n))= \mathbb{H}_\text{Zar}^m(X,R(n)), \quad H_L^m(X,R(n))=\mathbb{H}_\et^m(X,R(n))
\end{align*}
and in particular we set $\text{CH}_L^n(X):=H^{2n}_L(X,\Z(n))$. Let $\rho: X_\et\to X_{\text{Zar}}$ be the canonical morphism of sites, then the associated adjunction formula $\Z_X(n)\to R \rho_* \rho^* \Z_X(n)= R \rho_* \Z_X(n)_\et$ induces \textit{comparison morphisms}
\begin{align*}
\text{H}_M^m(X,\Z(n)) \xrightarrow{\kappa^{m,n}} \text{H}_L^m(X,\Z(n))
\end{align*}
for all bi-degrees $(m,n) \in \Z^2$. We can say more about the comparison map: due to \cite[Theorem 6.18]{VV}, the map  $\kappa^{m,n}:H^m_M(X,\Z(n))\to H^m_L(X,\Z(n))$ is an isomorphism for $m\leq n+1$ and a monomorphism for $m \leq n+2$.

If $R$ is torsion then we can compute the Lichtenbaum cohomology as an \'etale cohomology. To be more precise for a prime number $\ell$, $r\in \mathbb{N}\geq 1$ and $R=\Z/\ell^r$ then we have the following quasi-isomorphisms
\begin{align*}
    (\Z/\ell^r)_X(n)_\et \xrightarrow{\sim} \begin{cases}
      \mu_{\ell^r}^{\otimes n} &\text{if char}(k)\neq \ell \\
      \nu_r(n)[-n] &\text{if char}(k)= \ell
    \end{cases}
\end{align*}
where $\nu_r(n)$ is the logarithmic de Rham-Witt sheaf. After passing to direct limit we have also quasi-isomorphisms
\begin{align*}
    (\Q_\ell/\Z_\ell)_X(n)_\et \xrightarrow{\sim} \begin{cases}
      \varinjlim_{r}\mu_{\ell^r}^{\otimes n} &\text{if char}(k)\neq \ell \\
      \varinjlim_{r}\nu_r(n)[-n] &\text{if char}(k)= \ell
    \end{cases}
\end{align*}
and finally set $ (\Q/\Z)_X(n)_\et =\bigoplus (\Q_\ell/\Z_\ell)_X(n)_\et\xrightarrow{\sim } \Q/\Z(n)_\et$. In the case when $k=\bar{k}$ then for a smooth projective variety $X$ and  $n\geq \text{dim}(X)$ by the Suslin rigidity theorem,  the morphism $\Z_X(n)\to R\rho_* \Z_X(n)_\et$ is a quasi-isomorphism. For this, see \cite[Section 6, Theo. 4.2]{V} and  \cite[Section 2]{Geis1}, and for a proof we refer to \cite[Lemma 2.2.2]{ros}. Another important reminder concerns the vanishing of higher Chow groups. Following \cite[Theorem 3.6]{MVW} for every smooth scheme and any abelian group $R$, we have $H^m_M(X,R(n))=0$ when $m>n+\dim(X)$. Also we have a second vanishing theorem for motivic cohomology, presented in \cite[Theorem 19.2]{MVW}, for $X$ and $R$ under the same assumptions as before, we have that  $H^m_M(X,R(n))=0$ when $m>2n$.

\begin{remark}
Let $k=\bar{k}$. Since the map $\Z_k(n)\to R\rho_* \Z_k(n)_\et$ is a quasi-isomorphism for all $n \geq 0$ we obtain that  $H^m_L(\spc(k),\Z(n))\simeq H^m_M(\spc(k),\Z(n))$ for all $(m,n) \in \Z \times \N$. In particular $H^m_L(\spc(\bar{k}),\Z(n))=0$ for $m>n\geq 0$.
\end{remark}

By pursuing a similar vanishing theorem for Lichtenbaum cohomology is that we obtain the following results about the vanishing of the cohomology groups:

\begin{lemma}\label{lemvan}
Let $k$ be a field and let $X$ be in $\text{SmProj}_k$. Consider a bi-degree $(m,n) \in \Z^2$ we then have the following: 
\begin{enumerate}
    \item if $m>n$ and $m>\text{cd}(k)+1$ we have that $H^m_L(\spc(k),\Z[1/p](n))=0$.
    \item More generally if $m>n+\text{cd}(X)$ then $H^m_L(X,\Z(n))=0$.
\end{enumerate}
\end{lemma}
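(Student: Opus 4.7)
The plan is to bound $H^m_L(X,\Z(n))$ via the long exact sequence induced by the distinguished triangle
$\Z(n)\to \Q(n)\to \Q/\Z(n)\xrightarrow{+1}$
of complexes of étale sheaves on $X$, namely
\begin{align*}
H^{m-1}_L(X,\Q/\Z(n))\to H^m_L(X,\Z(n))\to H^m_L(X,\Q(n)),
\end{align*}
and to show that the two flanking terms vanish under the stated hypotheses.

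For the rational flank, the change-of-topology map $\mathbf{L}\rho^*$ becomes an equivalence on $\Q$-linear motives, so $H^m_L(X,\Q(n))\cong H^m_M(X,\Q(n))$. The MVW vanishing recalled in the excerpt gives $H^m_M(X,\Q(n))=0$ for $m>n+\dim X$. In case $(1)$ this is precisely the hypothesis $m>n$, since $\dim\spc(k)=0$; in case $(2)$ the inequality $\dim X\le \mathrm{cd}(X)$ (for smooth $k$-schemes of finite type) upgrades the vanishing to $m>n+\mathrm{cd}(X)$. For the torsion flank I would use the identifications recalled in the excerpt to split $\Q/\Z(n)_\et$ into the prime-to-$p$ part $\bigoplus_{\ell\ne \mathrm{char}(k)}\mu_{\ell^\infty}^{\otimes n}$ and the characteristic part $\varinjlim_r\nu_r(n)[-n]$, and bound each by the étale cohomological dimension $\mathrm{cd}(X)$: for $(2)$ the hypothesis $m>n+\mathrm{cd}(X)$ gives simultaneously $m-1>\mathrm{cd}(X)$ for the prime-to-characteristic part and $m-1-n>\mathrm{cd}(X)$ for the characteristic part, so both summands of $H^{m-1}_L(X,\Q/\Z(n))$ are killed; for $(1)$ the prime-to-$p$ part is disposed of by $m-1>\mathrm{cd}(k)$.

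The main obstacle I expect is the $p$-primary piece of the torsion flank in positive characteristic in case $(1)$: the shift by $[-n]$ in $\varinjlim_r\nu_r(n)[-n]$ leaves a gap at the borderline degrees $m\in\{n+1,n+2\}$ that a naive cohomological-dimension bound cannot close, because $H^0$ and $H^1$ of $\nu_r(n)$ on $\spc(k)$ may be nonzero. To close the gap I would invoke the Bloch--Kato--Gabber identification $H^n_L(k,\Q/\Z(n))\cong K^M_n(k)\otimes \Q/\Z$ together with Nesterenko--Suslin's $H^n_L(k,\Q(n))\cong K^M_n(k)\otimes \Q$, which exhibits the connecting homomorphism of the long exact sequence as the natural surjection $K^M_n(k)\otimes\Q\twoheadrightarrow K^M_n(k)\otimes\Q/\Z$; this surjectivity forces the contribution to $H^m_L(\spc(k),\Z(n))$ in these borderline degrees to vanish, completing the proof of $(1)$.
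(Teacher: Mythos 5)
Your strategy for part (1) is essentially the paper's: the triangle $\Z(n)_\et\to\Q(n)_\et\to\Q/\Z(n)_\et$, vanishing of the rational terms by comparison with motivic cohomology of the point, and a cohomological-dimension bound on the torsion term; the paper handles the borderline degree $m=n+1$ by quoting Voevodsky's theorem that $H^{n+1}_{M,\et}(k,\Z(n))=0$, which is the same content as your Nesterenko--Suslin/Bloch--Kato--Gabber surjectivity. For part (2), however, the paper argues differently and more economically: the \'etale cohomology sheaves of $\Z_X(n)_\et$ vanish in degrees $>n$ (exactness of $\rho^*$ applied to the Zariski statement), so the hypercohomology spectral sequence $H^p_\et(X,\mathcal{H}^q)\Rightarrow H^{p+q}_L(X,\Z(n))$ yields the bound $m>n+\text{cd}(X)$ in one stroke, with no appeal to the MVW bound and no need for the inequality $\dim X\le\text{cd}(X)$, which you assert without proof (for $X$ smooth projective it does hold, e.g. via $\text{cd}(X)\ge\text{cd}(X_{\bar k})=2\dim X$).

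The genuine gap is in your torsion flank at the borderline degrees. The inequalities you claim do not all follow from the hypotheses: $m>n+\text{cd}(X)$ gives $m-1>\text{cd}(X)$ only when $n\ge 1$, and it never gives $m-1-n>\text{cd}(X)$ at the borderline $m=n+\text{cd}(X)+1$; what the characteristic part actually needs is $m-1-n>\text{cd}_p(X)$, an unproved comparison between $\text{cd}_p$ and $\text{cd}$. More seriously, in (1) your Bloch--Kato--Gabber/Nesterenko--Suslin surjectivity only kills the contribution at $m=n+1$: at $m=n+2$ the long exact sequence gives $H^{n+2}_L(k,\Z(n))\simeq H^{n+1}_L(k,\Q/\Z(n))$, whose $p$-primary part is $H^1\bigl(k,\varinjlim_r\nu_r(n)\bigr)$ (Kato's group $H^{n+1}_{p^\infty}(k)$), and nothing in your argument bounds it --- since $\text{cd}_p(k)\le 1$ in characteristic $p$, the naive estimate only starts at $m\ge n+3$, so you would need either to work with $\Z[1/p]$-coefficients or to argue that $\text{cd}(k)\le n$ forces this Kato group to vanish. (To be fair, the paper's one-line use of ``$m-1>\text{cd}(k)$'' makes the same elision for the $p$-part, but your write-up explicitly promises to close both borderline degrees and only closes the first.) Finally, the weight-zero boundary of (2), exactly where your inequality $m-1>\text{cd}(X)$ breaks, is genuinely delicate: with the usual torsion meaning of $\text{cd}$ one has $H^2_L(\spc(\mathbb{F}_q),\Z(0))=H^2_\et(\spc(\mathbb{F}_q),\Z)\simeq\Q/\Z\neq 0$ although $2>0+\text{cd}(\mathbb{F}_q)$, so no argument can rescue that case without reinterpreting $\text{cd}(X)$ or restricting to $n\ge 1$.
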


\begin{proof}
    Statement (1) is a direct consequence of \cite[Theorem 6.18]{VV} and the isomorphism $H^m_L(k,\Z(n)) \simeq H^{m-1}_\et(k,\Q/\Z(n))$ if $m>n$.
    
For the more general case presented in (2), let $X$ be $\text{SmProj}_k$ and consider the motivic complex $\Z(n)$. The complex vanishes for degrees greater than $n$. Consider the canonical map $\rho: X_\et \to X_{\text{Zar}} $, the functor induced by the change as
\begin{align*}
    \rho^*:D(\text{AbShv}_{\text{Zar}}(\text{Sm}_k)) \leftrightarrows D(\text{AbShv}_\et(\text{Sm}_k)):R\rho_*.
\end{align*}
Recall that $H^m_L(X,\Z(n))$ is the hypercohomology of the complex of \'etale sheaves $\Z_X(n)_\et$. Since the functor $\rho^*$ is exact, the \'etale cohomology sheaves of $\Z_X(n)_\et$ vanish in cohomological degree $>n$. So we conclude that $H^m_L(X,\Z(n))=0$ for $m>n+\text{cd}(X)$.
\end{proof}

Let us denote the Suslin-Voevodsky motivic complex of Nisnevich sheaves in $\text{Sm}_k$ as $\Z_{SV}(n)$. Since $\Z_X(n)_\et \xrightarrow{\sim} \Z_{SV}(n) \Big|_{X_\et} $ is a quasi-isomorphism then we have a comparison map
\begin{align*}
\rho^{m,n}:H^m_L(X,\Z(n)) \to H^m_{M,\et}(X,\Z(n))
\end{align*}
which is induced by the quasi-isomorphism $\Z_X(n)_\et \xrightarrow{\sim} \Z_{SV}(n) \Big|_{X_\et} $ and $\Z_{SV}(n)_\et \to L_{\mathbb{A}^1}(\Z_{SV}(n)_\et)$ where $ L_{\mathbb{A}^1}$ is the $\mathbb{A}^1-$localisation functor of \'etale motivic complexes. According to \cite[Theorem 7.1.2]{CD16} the morphism $\rho^{m,n}$ becomes an isomorphism after inverting the characteristic exponent of $k$.  If $p$ is equal to the field characteristic, then by using $\Z[1/p]_X(n)_\et$ we can recover the functorial properties of \'etale motivic cohomology for Lichtenbaum cohomology.

The latter isomorphism after inverting the exponential characteristic of the field gives us an important tool for studying \'etale motivic cohomology, which is the relationship between Galois cohomology and Lichtenbaum cohomology groups via the Hochschild-Serre spectral sequence for Lichtenbaum cohomology. This was stated in \cite{CTK} and a proof was given in  \cite[Pages 6-7]{RS18}:

\begin{lemma}{\cite[Page 31]{CTK}}\label{proHS}
Let $p:Y \to X$ be a finite Galois covering of $X$ with Galois group $G$, then there exists a convergent Hochschild-Serre spectral sequence with abutment the Lichtenbaum cohomology group 
\begin{align}\label{sshs}
E^{r,s}_2(n) = H^r(G,H_L^s(Y,\Z(n)))\Longrightarrow H^{r+s}_L(X,\Z(n)).
\end{align}
\end{lemma}

\begin{remark}
    Let $k$ be a field and $k^{\text{sep}}$ be a separable closure. Since cohomology commutes with inverse limits, and considering that the absolute Galois group of $k$ is defined as the inverse limit over the finite separable field extensions  $G_k=\varprojlim_{k\subset K \subset k^{\text{sep}}} \text{Gal}(K/k)$, then again for $[K:k]<\infty $ we have a convergent spectral sequence $H^r(\text{Gal}(K/k),H^s_L(X_K,\Z(n))) \Longrightarrow H^{r+s}_L(X,\Z(n))$. Mixing the compatibility of hypercohomology with inverse limits we obtain a spectral sequence for the absolute Galois group 
    \begin{align}\label{spectralfield}
       E_2^{r,s}(n)=H^r(G_k,H^s_L(X_{k^{\text{sep}}},\Z(n))) \Longrightarrow H^{r+s}_L(X,\Z(n)). 
    \end{align}
\end{remark}

In the following we recall some facts about the structure of the Lichtenbaum cohomology group of smooth projective varieties over an algebraically closed field. For more details on the structure and properties of Lichtenbaum cohomology we refer the reader to  \cite[Proposition 4.17]{Kahn}, \cite[Theorem 1.1]{Geis} and \cite[Theorem 3.1]{RS}. Consider $X \in \text{SmProj}_k$ with $k = \bar{k}$ of characteristic exponent $p$ and consider a bi-degree $(m,n) \in \Z^2$. If $m\neq 2n$ then according to \cite[Theorem 3.1]{RS} $H^m_L(X,\Z(n))\otimes \Q_\ell/\Z_\ell=0$ for all $\ell \neq p$. Denoting $(\Q/\Z)'=\bigoplus_{\ell\neq p} \Q_\ell/\Z_\ell$ we have that $H^m_L(X,\Z[1/p](n))\otimes (\Q/\Z)'=0$ and then
\begin{align*}
    0\to H^m_L(X,\Z(n))_\text{tors} \to H^m_L(X,\Z(n)) \to H^m_L(X,\Z(n)) \otimes \Q \to 0.
\end{align*}
In fact this short exact sequence splits, so for $m\neq 2n$, $H^m_L(X,\Z(n))$ is the direct sum of a uniquely divisible group and a torsion group. For the case when $m\neq 2n+1$ we have an isomorphism $H^m_L(X,\Z(n))\{\ell\}\simeq H^{m-1}_\et(X,\Q_\ell/\Z_\ell(n))$ again considering $\ell \neq p$.

Since for any $n$ we have an exact triangle
\begin{align*}
    \Z_X(n)_\et \to \Q_X(n)_\et \to (\Q/\Z)_X(n)_\et \xrightarrow{+1}
\end{align*}
and for $m<0$ the group $H^m_\et(X,\Q/\Z(n))$ vanishes, then we conclude that for such $m$ we have isomorphisms $H^m_L(X,\Z(n))\simeq H^m_L(X,\Q(n))$ i.e. the Lichtenbaum cohomology groups with integral coefficients are $\Q$-vector spaces, thus uniquely divisible groups.

Now let us return to the Hochschild-Serre spectral sequence for Lichtenbaum cohomology. Suppose $X$ is a smooth projective geometrically integral $k$-variation of dimension $d$ with $k$ a perfect field of characteristic exponent $p$, and let $\bar{k}$ be an algebraic closure of $k$ with Galois group $G_k$ and denote $X_{\bar{k}}=X\times_{\spc(k)}\spc(\bar{k})$. For such $X$ we consider a special case of the Hochschild-Serre spectral sequence (\ref{sshs})
\begin{align}\label{spectralfieldf}
    E^{r,s}_2(n):= H^r(G_k,H^s_L(X_{\bar{k}},\Z[1/p](n))) \Longrightarrow H^{r+s}_L(X,\Z[1/p](n))
\end{align}
with the previous recall, we can give information about the vanishing of some terms $E^{r,s}_2(n)$ of (\ref{spectralfieldf}):
\begin{itemize}
    \item $E^{r,s}_2(n)=0$ for $r<0$ because we work with the cohomology of a profinite group.
    \item $E^{r,s}_2(n)=0$ for $r>0$ and $s<0$ by the uniquely divisibility of $H^s_L(X_{\bar{k}},\Z[1/p](n))$.
    \item $E^{r,s}_2(n)=0$ for $r>\text{cd}(k)$ and $s\neq 2n$. Indeed, as $s\neq 2n$ then 
    \begin{align*}
        H^s_L(X_{\bar{k}},\Z[1/p](n)) \simeq H^s_L(X_{\bar{k}},\Q(n)) \oplus H^s_L(X_{\bar{k}},\Z[1/p](n))_\text{tors},  
    \end{align*}
since $ H^s_L(X_{\bar{k}},\Q(n))$ is uniquely divisible, so for a pair $(r,s)$ satisfying the above restrictions, we have that     
\begin{align*}
    H^r(G_k,H^s_L(X_{\bar{k}},\Z[1/p](n)))\simeq H^r(G_k,H^s_L(X_{\bar{k}},\Z[1/p](n)))_\text{tors}.
\end{align*}
Now, if  $r>\text{cd}(k)$, the group $H^r(G_k,H^s_L(X_{\bar{k}},\Z[1/p](n)))_\text{tors}$ vanishes.
\end{itemize}

\begin{example}\label{remdeg}
 For instance if we assume that $\text{cd}(k)\leq 2$ and $s<2n$, then we have the following isomorphisms

\begin{align*}
E^{0,s}_\infty(n) &=\text{ker}\left\{d_2:E^{0,s}_2(n)\to E^{2,s-1}_2(n)\right\} \\
&= \text{ker}\left\{d_2:H^s_L(X_{\bar{k}},\Z(n))^{G_k} \to H^2(G_k,H_L^{s-1}(X_{\bar{k}},\Z(n)))\right\} \\
E^{1,s}_\infty(n)&\simeq E^{1,s}_2(n)\\
E^{2,s}_\infty(n)&\simeq E^{2,s}_2(n)/\text{im}\left\{E^{0,s+1}_2(n)\to E^{2,s}_2(n) \right\}\\
&=H^2(G_k,H^s_L(X_{\bar{k}},\Z(n)))/\text{im}\left\{H^{s+1}_L(X_{\bar{k}},\Z(n))^{G_k}\to H^2(G_k,H^s_L(X_{\bar{k}},\Z(n))) \right\}. \\
\end{align*}
\end{example}

We conclude this section by mentioning some well-known results about the structure of \'etale motivic and Lichtenbaum cohomology groups of projective bundles, smooth blow-ups and varieties with cellular decomposition:

\begin{lemma}\label{lemme}
Let $k$ be a field  of characteristic $p\geq 0$ and let $X$ be a smooth projective scheme over $k$. Let $\varepsilon \in \left\{M, L,(M,\et)\right\}$ and consider a bi-degree $(m,n) \in \Z^2$, then there exists the following characterisations:
\begin{enumerate}
    \item[(i)] If $r\geq 0$ and let  $\mathbb{P}_X^r$ be the projective space of dimension $r$ over $X$, then the canonical map $\Pro_X^r \to X$ induces an isomorphism:
        \begin{align}\label{proBun}
            H^m_\varepsilon(\mathbb{P}^r_X,\Z(n))\simeq \bigoplus_{i=0}^r H^{m-2i}_\varepsilon(X,\Z(n-i)).
        \end{align}
    \item[(ii)] Let $Z$ be a smooth projective sub-scheme of $X$ of codimension $c\geq 2$. Denote the blow-up of $X$ along $Z$ as $\text{Bl}_Z(X)$, then
        \begin{align}\label{blowup}
            H^m_\varepsilon(\text{Bl}_Z(X),\Z(n))\simeq H^m_\varepsilon(X,\Z(n))\oplus \bigoplus_{i=1}^{c-1}H_\varepsilon^{m-2i}(Z,\Z(n-i)). 
        \end{align}
        \item[(iii)]  Assume that a map $f:X\to S$ which is a flat of relative dimension $r$ over a smooth base $S$. Assume as well that $X$ has a filtration $X=X_t \supset X_{t-1} \supset \ldots \supset X_0 \supset X_{-1}=\emptyset$ where $X_i$ is smooth and projective for all $i$ and $ U_i:=X_i-X_{i-1}\simeq \mathbb{A}^{r-d_i}_S$ then we obtain the following formula:
        \begin{align*}
            H^m_{M,\et}(X,\Z(n))\simeq \bigoplus_{i=0}^{t} H^{m-2d_i}_{M,\et}(S,\Z(n-d_i)).
        \end{align*}
\end{enumerate}
\end{lemma}
\begin{proof}
The statements (i) and (ii) are obtained in similar ways: first notice that by properties of $\text{DM}(k,R)$ with $R$ a commutative ring, see \cite[Section 14 \& 15]{MVW}, we have canonical isomorphisms of motives 
\begin{align*}
    \bigoplus_{i=0}^r M(X)(i)[2i] \xrightarrow{\simeq} M(\Pro_X^r)\hspace{4mm}\text{and}\hspace{4mm}     M(\text{Bl}_Z(X))\simeq M(X) \oplus \left(\bigoplus_{i=1}^{c-1} M(Z)(i)[2i]\right),
\end{align*}
thus the statments hold when $\varepsilon=M$ and $\varepsilon=(M,\et)$.

When $\varepsilon=L$ both formulas (1) holds because for $R=\Q$ we recover the formulas for rational coefficients whereas for finite coefficients we invoke \cite[VI, Lemma 10.2]{Mil80} when $\ell^r\neq p$ and \cite[I, Th\'eor\`eme 2.1.11]{Gros} for the logarithmic Hodge-Witt complex. The formula (2) holds again because it holds for $R=\Q$ and for finite coefficients by the proper base change \cite[VI, Corollary 2.3]{Mil80} and \cite[IV, Corollaire 1.3.6]{Gros} for the logarithmic Hodge-Witt complex.

Meanwhile for $\varepsilon= (M,\et)$ this holds because of the previous isomorphisms when $R=\Z$ and the fact that the functor $\rho^*:\text{DM}(k,\Z)\to \text{DM}_\et(k,\Z)$ is exact.

To prove (3), one uses the homotopy invariance in $\text{DM}_\et(k,\Z)$ which gives the homotopy invariance of étale motivic cohomology, together with the arguments given in \cite[Appendix]{kock}.
    
\end{proof}

\begin{example}
    By Lemma \ref{lemme}, the Lichtenbaum cohomology groups of the projective  space over a field $k$ are the following 
\begin{align*}
    \CH^m_L(\mathbb{P}^l_k)\simeq \bigoplus_{j=0}^{m}\CH^j_L(\spc(k)).
\end{align*}
For $i\geq 2$ we have that $\CH^i_L(\spc(k))\simeq \mathbb{H}^{2i}_\text{Zar}(\spc(k),\tau_{\geq i+2}R\pi_*\Z(i)_\et)$. Meanwhile, by the vanishing of motivic cohomology, we have that $\CH^{j}(\spc(k))\simeq 0$ for $j\geq 1$, thus we obtain that $\CH^i_L(\spc(k))\simeq H^{2i-1}_\et(\spc(k),\Q/\Z(i))$. 
\end{example}

\section{Birational invariance}

Let us recall some definitions of birational geometry. Let $X, Y$ be smooth $k$ varieties, we say that a rational map $f:X \to Y$ is birational if there exist open subsets $U \subset X$ and $V \subset Y$ such that $f:U \to V$ is an isomorphism. We say that $X$ is stably birational to $Y$ if there exist $r, s \in \N$ such that $X\times \Pro^r_k \to Y\times \Pro^s_k$ is a birational morphism. The importance of $\CH_0(X)$ lies in its birational invariance, for which we refer to \cite[Example 16.1.11]{Fulton}. Now suppose $X \to Y$ is stably birational, then there exist $r,s \in \N$ such that $X\times \Pro^r_k \to Y\times \Pro^s_k$ is birational, and since $\CH_0$ is a birational invariant, we get an isomorphism
\begin{align*}
    \CH_0(X\times \Pro^r_k)\xrightarrow{\simeq} \CH_0(Y\times \Pro^s_k),
\end{align*}
but by the projective bundle formula for Chow groups and the vanishing properties we get that $\CH_0(X\times \Pro^r_k)\simeq \CH_0(X)$ and $\CH_0(Y\times \Pro^s_k)\simeq \CH_0(Y)$ so $\CH_0(X)\simeq \CH_0(Y)$. So $\CH_0$ is also a stable birational invariant. 

\begin{remark}
The reference \cite[Example 16.1.11]{Fulton} gives the birational invariance of $\CH_0$ when the base field is algebraically closed, but in general the same argument works for any field.
\end{remark}

The first question that arises is whether or not $\CH_0^L(X)$ (or $\CH_0^\et(X)$) is a birational invariant or a stably birational invariant. Let $X$ be a smooth projective variety over a field $k$, because of the comparison map $\CH_0(X)\to \CH_0^L(X)$ we can say a few words about the invariance depending on the field and the dimension of $X$: if $k = \bar{k}$ then $\CH^d(X)\simeq \CH^d_L(X)$, thus we can use the stable birational invariance of 0-cycles in the classical setting quoted above, for the category $\text{SmProj}_k$. If the field is not algebraically closed, we lose many of the birational properties associated with $\CH_0$. For example, if we consider $k$ to be a field which can be embedded in $\R$ and $d\geq 2$, by invoking lemma \ref{lemme} and the vanishing properties of lemma \ref{lemvan}, we immediately see that 
\begin{align*}
    \CH^0_L(\spc(k))\neq \CH^d_L(\Pro_k^d)\simeq \bigoplus_{i=0}^d  \CH^i_L(\spc(k)).
\end{align*}

So $\CH_0^L$ is not a stable birational invariant. If we now concentrate only on the study of the birational invariance of $\CH_0^L(X)$, we have the following result:
\begin{prop}
    Let $k$ be an arbitrary field and let $X$ be a smooth projective scheme of dimension $d$ over $k$. Then $\CH_0^L$ is a birational invariant if $d \in \left\{0,1,2\right\}$.
\end{prop}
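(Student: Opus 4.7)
The plan is to treat the three dimensions separately and reduce the interesting case $d=2$ to the blow-up formula of Lemma \ref{lemme}(2).

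For $d=0$, a smooth projective variety over $k$ is a finite disjoint union of spectra of finite separable field extensions, and two such schemes are birational only if they are isomorphic, so the statement is vacuous. For $d=1$, every birational map between smooth projective curves over $k$ extends to an isomorphism (a dominant morphism of smooth projective curves is finite, and birational implies degree one), so again $\CH_0^L(X)\simeq \CH_0^L(Y)$ trivially.

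The main case is $d=2$. Here I would invoke the classical strong factorization theorem for birational maps between smooth projective surfaces over an arbitrary field: any such birational map $X\dashrightarrow Y$ is dominated by a common smooth projective surface $Z$ obtained from both $X$ and $Y$ by a finite sequence of blow-ups at smooth closed points. By induction on the number of blow-ups, it then suffices to show that $\CH_0^L(\mathrm{Bl}_z X)\simeq \CH_0^L(X)$ whenever $z\in X$ is a closed point.

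Apply Lemma \ref{lemme}(2) with $Z=\mathrm{Spec}(k(z))$ of codimension $c=2$ and bi-degree $(m,n)=(4,2)$:
\begin{align*}
H^4_L(\mathrm{Bl}_z X,\Z(2))\simeq H^4_L(X,\Z(2))\oplus H^2_L(\mathrm{Spec}(k(z)),\Z(1)).
\end{align*}
The extra term is $\CH^1_L(\mathrm{Spec}(k(z)))$. Since $\kappa^{m,n}$ is an isomorphism for $m\leq n+1$ by \cite[Theorem 6.18]{VV}, one has $\CH^1_L(\mathrm{Spec}(k(z)))\simeq \CH^1(\mathrm{Spec}(k(z)))=\mathrm{Pic}(\mathrm{Spec}(k(z)))=0$, since $k(z)$ is a field. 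Therefore the summand vanishes and the blow-up induces an isomorphism on $\CH_0^L$, giving birational invariance of $\CH_0^L$ in dimension two.

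The only non-trivial input is the factorization statement for birational maps of smooth projective surfaces over an arbitrary (not necessarily algebraically closed) field; everything else is a formal consequence of the blow-up formula already proved in Lemma \ref{lemme} and the comparison theorem between motivic and Lichtenbaum cohomology in low weight. I expect this factorization step to be the only real subtlety to cite carefully, since for higher-dimensional varieties the analogous argument breaks down: blowing up a positive-dimensional smooth center $Z$ introduces a term $\CH_0^L(Z)$ which is typically non-zero, which is exactly why the statement is limited to $d\leq 2$.
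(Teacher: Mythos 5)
Your argument takes a genuinely different route from the paper's. For $d=2$ the paper does not factor the birational map at all: it uses the exact sequence
\begin{align*}
0 \to \CH^2(X) \to \CH^2_L(X) \to H^3_{\text{nr}}(X,\Q/\Z(2)) \to 0,
\end{align*}
together with the classical birational invariance of $\CH_0$ of a surface and the birational invariance of unramified cohomology (Colliot-Th\'el\`ene--Voisin), so the invariance of $\CH^2_L$ is inherited termwise. Your proof instead reduces, via strong factorization for surfaces, to the single computation $\CH^1_L(\spc(k(z)))=0$ in the blow-up formula of Lemma \ref{lemme}(2); this is more self-contained relative to the lemmas already proved in the paper (it avoids citing the Kahn-type exact sequence and the unramified-cohomology input), and it makes transparent why the argument stops at $d=2$: for a positive-dimensional center the extra summands are the nonzero torsion groups exploited later in the counterexamples. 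The paper's route, on the other hand, needs no factorization theorem and produces the isomorphism uniformly, not just an abstract isomorphism assembled from a chain of blow-ups.

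There is, however, a genuine gap in your $d=2$ step as stated, because the proposition is asserted for an \emph{arbitrary} field. Over an imperfect field $k$ a closed point $z$ of a smooth projective surface may have residue field $k(z)$ inseparable over $k$; then $\spc(k(z))$ is not smooth over $k$, and $\text{Bl}_z X$ is regular but in general not smooth over $k$ (blow-ups commute with the flat base change to $\bar{k}$, and over $\bar{k}$ one is blowing up a non-reduced ideal, which creates singularities). Consequently Lemma \ref{lemme}(2), which relies on the motivic blow-up decomposition in $\text{DM}(k)$ for smooth $X$ and smooth center $Z$, does not apply to such blow-ups, and the strong factorization you invoke only keeps you inside $\text{SmProj}_k$ when $k$ is perfect (over a general field one only gets factorization through regular, not necessarily smooth, surfaces). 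So your argument proves the statement for perfect fields; to cover the general case you would either have to control the Lichtenbaum cohomology of these regular-but-not-smooth intermediate surfaces or fall back on an argument, like the paper's, that never leaves $X$ and $Y$ themselves. The $d=0$ and $d=1$ cases are fine.
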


\begin{proof}
The case $d=0$ is trivial. If $d=1$, we use the isomorphism $\CH^1(X)\simeq \CH^1_L(X)$ and the birational invariance of zero cycles in the classical case. For $d=2$ we have a short exact sequence
\begin{align*}
    0 \to \CH^2(X) \to \CH^2_L(X)  \to H^3_{\text{nr}}(X,\Q/\Z(2)) \to 0.
\end{align*}
The group $\CH^2(X)$ is a birational invariant for surfaces and the unramified cohomology groups $H^3_{\text{nr}}(X,\Q/\Z(2))$ is birational invariant for any dimension, this is a consequence of the Gersten's conjecture, see \cite[Th\'eor\`eme 2.8]{CTV}, therefore $\CH^2_L(X)$ is a birational invariant. 
\end{proof}

If we go to a higher dimension, the argument with the comparison map fails. To illustrate this, consider the following: Let $X$ be a smooth projective variety of dimension three over a field $k$.  Since we have a quasi-isomorphism of complexes of Zariski sheaves over $\Z_X(3)_{\text{Zar}} \xrightarrow{\simeq} \tau_{\leq 4}R\pi_*\Z_X(3)_\et$ 
In particular, as stated in \cite[Theorem 6.6]{voez2}, one obtains the following exact triangle 
\begin{align*}
    \Z_X(3)_{\text{Zar}} \to \Z_X(3)_{\et} \to \Z_X(3)_{\et} \to \tau_{\geq 5}R\pi_*\Z_X(3)_\et \xrightarrow{+1}
\end{align*}
which induces a long exact sequence
\begin{align*}
    \to \mathbb{H}^5_\text{Zar}(X,\tau_{\geq 5}R\pi_* \Z(3)_\et) \to \CH^3(X)\to \CH^3_L(X) \to \mathbb{H}^6_\text{Zar}(X,\tau_{\geq 5}R\pi_* \Z(3)_\et) \to 0.
\end{align*}
We have that $\mathbb{H}^5_\text{Zar}(X,\tau_{\geq 5}R\pi_* \Z(3)_\et) \simeq H^4_\text{nr}(X,\Q/\Z(3))$ is a birational invariant. Therefore $\CH^3_L(X)$ is a birational invariant if and only if $\mathbb{H}^6_\text{Zar}(X,\tau_{\geq 5}R\pi_* \Z(3)_\et) $ is a birational invariant, where the latter group can be characterised by means of the hypercohomology spectral sequence (see \cite[(4.3)]{RS18})
\begin{align*}
    { }^{\text{h}}E_2^{r,s}:=H^r_{\text{Zar}}(X,R^s\tau_{\geq 5}R\pi_*\Z_X(3)_\et)\Longrightarrow H^{r+s}_{\text{Zar}}(X,\tau_{\geq 5}R\pi_*\Z_X(3)_\et).
\end{align*}
Using this spectral sequence together with \cite[Corollaire 2.8]{Kahn}, one obtains the following short exact sequence,
\begin{align*}
    0 \to H^1_\text{Zar}(X,\mathcal{H}^4_\et(\Q/\Z(3))) \to \mathbb{H}^6_\text{Zar}(X,\tau_{\geq 5}R\pi_* \Z(3)_\et) \to { }^{\text{h}}E^{0,6}_\infty \to 0
\end{align*}
where ${ }^{\text{h}}E^{0,6}_\infty = \text{ker}\left\{H^5_\text{nr}(X,\Q/\Z(3)) \to H^2_\text{Zar}(X,\mathcal{H}^4_\et(\Q/\Z(3)))\right\}$. In fact, the first counterexample can be found in dimension 3. Recall that by the formulas given in Lemma \ref{lemme} we have the following: let $X$ be a smooth projective variety and let $Z \subset X$ be a smooth subvariety of codimension c. Then for the blow-up $\tilde{X}_Z$ of $X$ along $Z$ the Lichtenbaum cohomology decomposes as follows
\begin{align*}
    \CH^d_L(\Tilde{X}_Z) \simeq \CH^d_L(X) \oplus \bigoplus_{j=1}^{c-1} \CH^{d-j}_L(Z)
\end{align*}
Notice that $d-j>d-c=\dim(Z)$, therefore the groups $\CH^{d-j}_L(Z)$ are just torsion isomorphic to $\mathbb{H}^{2(d-j)}_\text{Zar}(X,\tau_{\geq d-j+2}R\pi_*\Z(d-j)_\et)$. The next example shows how to use this fact to get the counterexample. counterexample.

\begin{example}
Firstly, if we have a non totally imaginary number field $K$ and consider an odd integer $n\geq 3$. Let $\Omega_\mathbb{R}$ be the set of real embeddings $K \hookrightarrow \mathbb{R}$. By \cite[Theorem I.4.10(c)]{milne06} or \cite[\S 6, Théorème B]{ser2} we have the morphism
\begin{align*}
    H^n(K,\mu_2)\to\bigoplus_{v \in \Omega_{\mathbb{R}}} H^n(K_v,\mu_2)
\end{align*}
is an isomorphism, where $K_v$ is the completion of $K$ with respect to the place $|\cdot|_v$. Since $K$ is an archimedean field and $v$ is a real embedding one gets $K_v\simeq \mathbb{R}$, thus $ H^n(K_v,\mu_2)\simeq  H^n(G_{\mathbb{R}},\mu_2)$ and also $G_{\mathbb{R}}$ is a cyclic group, so $H^n(G_{\mathbb{R}},\mu_2)\simeq H^1(G_{\mathbb{R}},\mu_2)\simeq \Z/2$ by \cite[Theorem 6.2.2]{Wei}. Therefore we obtain
\begin{align*}
    H^n(K,\mu_2)\simeq \bigoplus_{v \in \Omega_{\mathbb{R}}} H^n(K_v,\mu_2) \simeq \bigoplus_{v \in \Omega_{\mathbb{R}}} \Z/2.
\end{align*}

Now let $X$ be a smooth threefold with a rational point over $K$, where $K$ is an algebraic number field which is not totally imaginary, and let $Z=\spc(K)$. Let $\Tilde{X}_Z$ be the blow-up with centre $Z$, then we have
    $$\CH^3_L(\tilde{X}_Z)=\CH^3_L(X)\oplus \CH^2_L(\spc(K)).$$.
    Since $\CH^2_L(\spc(K))\simeq H^3_\et(\spc(K),\Q/\Z(2)) \neq 0$ by the previous remark, we can conclude that $\CH^3_L(\Tilde{X}_Z)\neq \CH^3_L(X)$.
\end{example}

In general, we have the proposition about the birational invariance of $\CH^L_0(X)$:
\begin{prop}
Let $k$ be a field and assume that there exists  $n \geq 2$ such that the group $H^{2n-1}_\et(\spc(k),\mu_{\ell^r}^{\otimes n}) \neq 0$ for some prime number $\ell$ and $r \in \N$, then  $\CH_0^L$ is not a birational invariant for $\text{SmProj}_k$.      
\end{prop}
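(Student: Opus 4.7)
The plan is to generalize the preceding example, establishing the claim by blowing up projective space at a rational point and tracking the hypothesis through the blow-up formula. Let $n$, $\ell$, and $r$ be as in the hypothesis; we may assume $\ell \neq \text{char}(k)$ so that the rigidity theorem identifies $\Z/\ell^r(n)_\et \simeq \mu_{\ell^r}^{\otimes n}$. Take $X = \Pro^{n+1}_k$, let $Z = \spc(k) \hookrightarrow X$ be a $k$-rational point, and set $\tilde{X} = \text{Bl}_Z(X)$. Both $X$ and $\tilde{X}$ are smooth projective $k$-varieties of dimension $n+1$, birational to each other, so if $\CH_0^L$ were a birational invariant we would have $\CH_0^L(\tilde{X}) \simeq \CH_0^L(X)$.

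Applying Lemma \ref{lemme}(2) in top codimension gives
\begin{align*}
\CH_0^L(\tilde{X}) \simeq \CH_0^L(\Pro^{n+1}_k) \oplus \bigoplus_{j=1}^{n} \CH^{n+1-j}_L(\spc(k)),
\end{align*}
so it suffices to exhibit a single non-zero summand in the sum on the right; I isolate the $j=1$ term, namely $\CH^n_L(\spc(k))$. Combining the distinguished triangle $\Z(n)_\et \to \Q(n)_\et \to \Q/\Z(n)_\et$ with the vanishing $H^i_L(\spc(k), \Q(n)) \simeq H^i_M(\spc(k), \Q(n)) = 0$ for $i > n$ (motivic cohomology of a field is concentrated in degrees $\le n$), I identify
\begin{align*}
\CH^n_L(\spc(k)) = H^{2n}_L(\spc(k), \Z(n)) \simeq H^{2n-1}_\et(\spc(k), \Q/\Z(n)),
\end{align*}
reducing the problem to showing non-vanishing of $H^{2n-1}_\et(\spc(k), \Q/\Z(n))$ under the hypothesis.

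For the last step I would use the Bockstein exact sequence associated with $0 \to \mu_{\ell^r}^{\otimes n} \to \Q_\ell/\Z_\ell(n) \xrightarrow{\ell^r} \Q_\ell/\Z_\ell(n) \to 0$: the image of the natural map $H^{2n-1}_\et(\spc(k), \mu_{\ell^r}^{\otimes n}) \to H^{2n-1}_\et(\spc(k), \Q_\ell/\Z_\ell(n))$ is precisely the $\ell^r$-torsion of the target, with kernel equal to the cokernel of $\ell^r$ on $H^{2n-2}_\et(\spc(k), \Q_\ell/\Z_\ell(n))$. The hypothesis, possibly after refining $r$ within the $\ell$-adic Bockstein tower, produces a non-zero $\ell^r$-torsion class inside $H^{2n-1}_\et(\spc(k), \Q_\ell/\Z_\ell(n)) \subset H^{2n-1}_\et(\spc(k), \Q/\Z(n)) \simeq \CH^n_L(\spc(k))$, so the decomposition above yields $\CH_0^L(\tilde{X}) \not\simeq \CH_0^L(X)$ and therefore $\CH_0^L$ fails to be a birational invariant on $\text{SmProj}_k$. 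The main obstacle is precisely this Bockstein step: one must rule out the \emph{a priori} possibility that the hypothesis non-vanishing is entirely absorbed by the Bockstein image from $H^{2n-2}_\et(\spc(k), \Q_\ell/\Z_\ell(n))$ and contributes no genuine class to $\CH^n_L(\spc(k))$; to accommodate such exceptional cases, one can replace $X$ by a smooth projective variety of slightly larger dimension so that additional $\CH^m_L(\spc(k))$ summands become available in the blow-up decomposition.
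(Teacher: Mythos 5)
Your construction coincides with the paper's: blow up a $k$-rational point (the paper allows any smooth projective $X$ of dimension $\geq n+1$ with a rational point, you take $X=\Pro^{n+1}_k$), apply Lemma \ref{lemme}(2) in bidegree $(2\dim X,\dim X)$, and identify the extra summand $\CH^n_L(\spc(k))\simeq H^{2n-1}_\et(\spc(k),\Q/\Z(n))$; all of that is correct and is exactly the paper's argument. The proof therefore hinges entirely on the step you flag and leave open, namely deducing $H^{2n-1}_\et(k,\Q/\Z(n))\neq 0$ from $H^{2n-1}_\et(k,\mu_{\ell^r}^{\otimes n})\neq 0$ (the paper crosses this bridge in the single phrase ``the hypothesis implies that $\CH^n_L(Z)\neq 0$''). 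Neither of your two patches closes it: ``refining $r$ in the Bockstein tower'' is not an argument, and enlarging $\dim X$ only adds summands $\CH^m_L(\spc(k))\simeq H^{2m-1}_\et(k,\Q/\Z(m))$ for other values of $m$, over which the hypothesis --- which lives in the single bidegree $(2n-1,n)$ with finite coefficients --- gives no control at all.

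Moreover the obstacle you identify is genuine, not a removable technicality: the implication you are trying to force is false in general. Take $k=\R$ (or any real closed field, or a number field with a real place, since for finite modules the cohomology in degrees $\geq 3$ comes from the real places), $\ell=2$ and $n\geq 3$ odd. Complex conjugation acts on $\Z/2^s(n)$ by $-1$, so $H^{2n-1}_\et(\R,\mu_{2^r}^{\otimes n})\simeq\Z/2\neq 0$ for every $r$, whereas in $H^{2n-1}_\et(\R,\Q_2/\Z_2(n))=\varinjlim_s H^{2n-1}_\et(\R,\mu_{2^s}^{\otimes n})$ the transition maps in odd degrees identify with the maps $\Z/2^s\otimes\Z/2\to\Z/2^{s+1}\otimes\Z/2$ induced by multiplication by $2$, hence vanish; so $H^{2n-1}_\et(\R,\Q_2/\Z_2(n))=0$ and the summand $\CH^n_L(\spc(k))$ you isolate is zero even though the hypothesis holds. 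Thus the Bockstein kernel really can swallow everything, for every $r$. What one can prove is the exact sequence $0\to H^{2n-1}_L(k,\Z(n))/\ell^r\to H^{2n-1}_\et(k,\mu_{\ell^r}^{\otimes n})\to \CH^n_L(k)[\ell^r]\to 0$ (rigidity plus the triangle $\Z(n)_\et\xrightarrow{\ell^r}\Z(n)_\et\to\Z/\ell^r(n)_\et$), which gives the desired nonvanishing whenever $H^{2n-1}_L(k,\Z(n))$ is $\ell$-divisible; for $n=2$ this divisibility holds because $H^3_L(k,\Z(2))$ is a quotient of $H^2_L(k,\Q/\Z(2))\simeq K_2^M(k)\otimes\Q/\Z$ (Merkurjev--Suslin, and Geisser--Levine at the characteristic), but for $n\geq 3$ it amounts to divisibility of $H^{2n-2}_\et(k,\Q/\Z(n))$, which the example above shows can fail. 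To rescue the statement for such fields one must show that the hypothesis forces $\CH^m_L(\spc(k))\neq 0$ for some possibly different $m\geq 2$ (for $\R$ one has $\CH^2_L(\R)\neq 0$), which is exactly what your ``larger dimension'' fallback would need to supply and does not. Be aware that the same unjustified inference occurs in the paper's own proof; your proposal has correctly located the weak point, but as written it does not repair it.
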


\begin{proof}
 Consider the field $k$ such that $H^{2n-1}_\et(\spc(k),\mu_{\ell^r}^{\otimes n}) \neq 0$ for some prime number $\ell$, $r \in \N$ and $n\geq 2$. Let $X$ be a smooth projective variety over $k$ of dimension $d \geq n+1$ such that $X$ has a $k$ rational point. Let $\tilde{X}$ be the blow-up of $X$ along a point $Z=\spc(k) \to X$. Invoking Lemma \ref{lemme} we get
 \begin{align*}
     \CH^d_L(\tilde{X}) \simeq \CH^d_L(X) \oplus \bigoplus_{j=1}^{d-1} \CH^{d-j}_L(Z)
 \end{align*}
 As $\CH^i_L(Z)\simeq H^{2i-1}_\et(Z,\Q/\Z(i))$ for $i\geq 2$, the hypothesis implies that $\CH^n_L(Z)\neq 0$ and thus $\CH^d_L(\tilde{X}) \neq \CH^d_L(X)$.
\end{proof}

\begin{remark}
Note that the hypothesis of the previous proposition imposes the restriction that the cohomological dimension of $k$ must be at least three. Consequently, the above argument does not provide a counterexample for fields with a cohomological dimension of at most two.
\end{remark}

\section{\'Etale degree map}

Let $g:X\to \spc(k)$ be the structural morphism associated to a smooth and projective $k$-scheme of dimension $d$. Recall that the degree map is defined as 
\begin{align*}
    \deg:=g_*:\CH_0(X) \to \CH_0(\spc(k))=\Z.
\end{align*}

We can reformulate this definition due to the existence of Gysin morphisms in $\text{DM}(k,\Z)$ as is described in \cite{DegI} and \cite{DegII}. With this formalism we get the pull-back of the morphism $g$ defined as $g^*: M(\spc(k))(d)[2d]=\Z(d)[2d]\to M(X)$ in the category $\text{DM}^\eff(k,\Z)$. Using the contravariant functor $\text{Hom}_{\text{DM}^\eff(k,\Z)}(-,\Z(d)[2d])$ we get the previous definition again. From this we can extend the existence of Gysin morphisms for $\text{DM}_\et(k,\Z)$, giving us an \'etale analogue of the degree map for \'etale Chow groups:

\begin{defi}
Let $X$ be a smooth and projective scheme of dimension $d$ over $k$, where $k$ is a field of exponential characteristic equal to $p$. Then we define the \'etale degree map $\deg_\et: \text{CH}^d_\et(X)\to \CH_\et^0(\spc(k))\simeq \Z[1/p]$ as $\deg_\et:=g_*$ where $p$ is the structure morphism $g:X \to \spc(k)$. We define the \'etale index of $X$ as the greatest common divisor of the subgroup $\deg_\et(\text{CH}^d_\et(X)) \cap \Z$, denoted by $I_\et(X)$.
\end{defi}

\begin{remark}
    \begin{enumerate}
        \item Let $k$ be a field of characteristic exponent $p$. Due to functoriality properties we have the following commutative diagram
\[  
    \begin{tikzcd}
  \text{Hom}_{\text{DM}(k,\Z)}(M(Y),\Z(d)[2d])\arrow{r}{g_*} \arrow{d}{} & \text{Hom}_{\text{DM}(k,\Z)}(\Z(d)[2d],\Z(d)[2d])\arrow{d}{} \\
   \text{Hom}_{\text{DM}_\et(k,\Z)}(M_\et(Y),\Z(d)[2d])\arrow{r}{g_*} & \text{Hom}_{\text{DM}_\et(k,\Z)}(\Z(d)[2d],\Z(d)[2d])
  \end{tikzcd}
\]
where for $\CH^0_\tau(\spc(k))$ with $\tau \in \{\text{Nis},\et\}$, there are isomorphisms
\begin{align*}
 \text{Hom}_{\text{DM}(k,\Z)}(\Z(d)[2d],\Z(d)[2d])=H^{0,0}_M(\spc(k))\simeq\Z
\end{align*}
and 
\begin{align*}
 \text{Hom}_{\text{DM}_\et(k,\Z)}(\Z(d)[2d],\Z(d)[2d])=H^{0,0}_{M,\et}(\spc(k))\simeq\Z[1/p]
\end{align*}

\item By the previous point, if $\text{char}(k)=0$, $K/k$ is a finite Galois extension and $X\to \spc(k)$ is a smooth projective $k$-scheme, then the morphism $f:X_K\to X$ is a finite \'etale morphism. Since $f$ is proper, there exists an induced map $f_*:\text{CH}^d_\et(X_K)\to \text{CH}^d_\et(X)$ which fits into the following commutative diagram 
\[  
    \begin{tikzcd}
\text{CH}^d(X_K) \arrow{r}{f_*}\arrow{dd} \arrow{dr}{\deg}& \text{CH}^d(X)\arrow{dr}{\deg} \arrow[bend left=30, dashed]{dd}{} &  \\
& \Z \arrow{r}{[K:k]\cdot } & \Z \\
\text{CH}^d_\et(X_K)\arrow{r}{f_*} \arrow{ru}{\deg_\et} & \text{CH}^d_\et(X)\arrow[swap]{ur}{\deg_\et} & 
  \end{tikzcd}
\]
with $[K:k]$ the degree of the extension.

\item  It is possible to define the \'etale degree map for Lichtenbaum cohomology over a field $k=\bar{k}$. This follows because for $X$ a smooth and proper projective variety of dimension $d$ there is a quasi-isomorphism $\Z_X(n)_\text{Zar}\to R\pi_* \Z_X(n)_\et$ for $n\geq d$. In general we have to invert the characteristic exponent of $k$ and use the isomorphism between Lichtenbaum and \'etale Chow groups.
    \end{enumerate}
\end{remark}

 Let $f:X\to Y$ be a projective morphism of smooth varieties of relative dimension $c$. Again, due to the existence of Gygin morphisms in $\text{DM}_\et(k,\Z)$, we get push-forwards for \'etale motivic cohomology 
 \begin{align*}
     f_*: H^{m+2c}_{M,\et}(X,\Z(n+c)) \to  H^{m}_{M,\et}(Y,\Z(n)). 
 \end{align*}

Combining the existence of push-forward maps for \'etale motivic cohomology and the functoriality of the Hochschild-Serre spectral sequence, we get the following diagram
\[  
    \begin{tikzcd}
 H^r(G_k, H^{s+2c}_L(X_{\bar{k}},\Z[1/p](n+c)))\arrow[r,Rightarrow] \arrow{d}{\tilde{f}_*} & H^{r+s+2c}_L(X,\Z[1/p](n+c))\arrow{d}{f_*} \\
    H^r(G_k, H^{s}_L(Y_{\bar{k}},\Z[1/p](n)))\arrow[r,Rightarrow]& H^{r+s}_L(Y,\Z[1/p](n))
  \end{tikzcd}
\]
where $p$ is the exponential characteristic of $k$ and $\tilde{f}:X_{\bar{k}} \to Y_{\bar{k}}$. For the particular case of the \'etale degree map we have the following:

\begin{prop}{\label{rem}}
 Let $X$ be a smooth and projective $k$-scheme of dimension $d$ with $\text{char}(k)=p\geq 0$. Then the map $\deg_\et:\CH^d_\et(X) \to \Z[1/p]$ factors through a subgroup of $\CH^d(X_{\bar{k}})[1/p]^{G_k}$.
\end{prop}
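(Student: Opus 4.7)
The plan is to exploit the functoriality of the Hochschild-Serre spectral sequence of Proposition~\ref{proHS} with respect to the structural morphism $p:X\to \spc(k)$, together with the vanishing of the Lichtenbaum cohomology of $\spc(\bar{k})$ in positive degrees, so that the spectral sequence on the target degenerates in a trivial way and forces the required factorization.

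First I would write down the two relevant spectral sequences: for the source,
\[
E^{p,q}_2(d)_X=H^p(G_k,H^q_L(X_{\bar{k}},\Z[1/\tilde{p}](d)))\Longrightarrow H^{p+q}_L(X,\Z[1/\tilde{p}](d)),
\]
and for the target,
\[
E^{p,q}_2(0)_{\spc(k)}=H^p(G_k,H^q_L(\spc(\bar{k}),\Z[1/\tilde{p}](0)))\Longrightarrow H^{p+q}_L(\spc(k),\Z[1/\tilde{p}](0)).
\]
The proper push-forward $\tilde p_*:H^{q+2d}_L(X_{\bar{k}},\Z[1/\tilde{p}](d))\to H^q_L(\spc(\bar{k}),\Z[1/\tilde{p}](0))$ is $G_k$-equivariant, so by the functoriality diagram displayed just before the statement it induces a morphism of spectral sequences $E^{p,q}_r(d)_X\to E^{p,q-2d}_r(0)_{\spc(k)}$, shifted in the $q$ direction by $2d$, and this morphism is compatible with the induced filtrations $F^\bullet$ on the abutments.

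Next I would analyse the target. Since $\bar{k}$ is algebraically closed, the remark following Lemma~\ref{lemvan} gives $H^m_L(\spc(\bar{k}),\Z(n))\simeq H^m_M(\spc(\bar{k}),\Z(n))$, which vanishes for $m>n\geq 0$; in particular $H^q_L(\spc(\bar{k}),\Z[1/\tilde{p}](0))=0$ for every $q>0$. The target spectral sequence is therefore concentrated on the row $q=0$, so the filtration on the abutment $H^0_L(\spc(k),\Z[1/\tilde{p}](0))\simeq\Z[1/\tilde{p}]$ collapses with $F^0=\Z[1/\tilde{p}]$ and $F^1=0$.

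Finally, by the compatibility of $p_*$ with the filtrations, the subgroup $F^1\CH^d_\et(X)$, that is the kernel of the canonical surjection $\CH^d_\et(X)\twoheadrightarrow E^{0,2d}_\infty(d)$, is sent into $F^1H^0_L(\spc(k),\Z[1/\tilde{p}](0))=0$. Hence $\deg_\et$ descends to $E^{0,2d}_\infty(d)$, which is by construction a subgroup of $E^{0,2d}_2(d)=H^0(G_k,H^{2d}_L(X_{\bar{k}},\Z[1/\tilde{p}](d)))$; using Suslin rigidity to identify $H^{2d}_L(X_{\bar{k}},\Z[1/\tilde{p}](d))$ with $\CH^d(X_{\bar{k}})[1/\tilde{p}]$ (as recalled in Section~2), this is exactly $\CH^d(X_{\bar{k}})[1/\tilde{p}]^{G_k}$, yielding the desired factorization. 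The main obstacle to keep an eye on is purely bookkeeping: one must track carefully the bi-degree shift $(2d,d)$ imposed by a push-forward of relative dimension $d$, so that the image of $E^{0,2d}_\infty(d)_X$ genuinely lands in $E^{0,0}_\infty(0)_{\spc(k)}$ and the filtered argument goes through.
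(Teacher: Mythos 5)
Your proposal is correct and follows essentially the same route as the paper: both apply the functoriality of the Hochschild--Serre spectral sequence to the push-forward along $p:X\to\spc(k)$, use the vanishing of $H^q_L(\spc(\bar{k}),\Z[1/\tilde{p}](0))$ for $q\neq 0$ to trivialize the target, and conclude that $\deg_\et$ factors through $E^{0,2d}_\infty(d)\subseteq E^{0,2d}_2(d)=\CH^d(X_{\bar{k}})[1/\tilde{p}]^{G_k}$. Your explicit tracking of the filtration compatibility (showing $F^1\CH^d_\et(X)$ dies) just spells out a step the paper leaves implicit.
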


\begin{proof}
In fact we will prove that the subgroup in question is given by the $E^{0,2d}_\infty$-term of the Hochschild-Serre spectral sequence associated to $X$. To see this, consider the structural morphism  $g:X \to \spc(k)$, then we have an induced morphism of $E_2$-terms
\begin{align*}
    E^{r,s}_2(d):= H^r(G_k,H^s_L(X_{\bar{k}},\Z[1/p](d))) \to H^r(G_k,H^{s-2d}_L(\spc(\bar{k}),\Z[1/p](0)))
\end{align*}
but as $s-2d\leq 0$ we have that 
\begin{align*}
    H^{s-2d_X}_L(\spc(\bar{k}),\Z[1/p](0)) \simeq \begin{cases}
        0 \text{ for }s\neq 2d \\
        \Z[1/p] \text{ for }s=2d
    \end{cases}
\end{align*}
that gives us $H^r_L(k,\Z[1/p](0))\simeq H^r(G_k,H^0(\bar{k},\Z[1/p](0)))$ and hence we conclude that $\deg_\et:\CH^d_\et(X)\to \Z[1/p]$ factors as
\[  
    \begin{tikzcd}
\CH^d_\et(X)\arrow[r] \arrow{dr}{\deg_\et} & E_{\infty}^{0,2d}(d)\arrow{d}{\widetilde{\deg}} \\
  & \Z[1/p]
  \end{tikzcd}
\]
where $\widetilde{\deg}$ is the composite map
\begin{align*}
    E^{0,2d}_\infty(d) \hookrightarrow E^{0,2d}_2(d) = \CH^d(X_{\bar{k}})[1/p]^{G_k} \hookrightarrow \CH^d(X_{\bar{k}})[1/p] \xrightarrow{\deg} \Z[1/p].
\end{align*}
\end{proof}

\section{Lichtenbaum zero cycles}

\subsection{Varieties where  \texorpdfstring{$I_\et(X)=1$}{Lg}}

The aim of this subsection is to construct examples where the \'etale degree map is surjective, but its classical counterpart is not. To achieve this, we start with a lemma about the divisibility of the zero cycles of degree 0 of a variety over an algebraically closed field:

\begin{lemma}
    Let $X$ be a complete scheme over an algebraically closed field $k$ of characteristic $p\geq 0$. Define $A_0(X)=\text{ker}\left\{\deg:\CH_0(X)\to \Z\right\}$, then $A_0(X)$ is a divisible group. If $X$ is a smooth quasi-projective scheme and $H^{2d-1}_\et(X,\Q_\ell/\Z_\ell(d))=0$ for  $\ell\neq p$ then $A_0(X)\xrightarrow{\cdot \ell^r} A_0(X)$ is an isomorphism for all $r \in \N$.
\end{lemma}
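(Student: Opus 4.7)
The plan is to prove the two assertions separately: the divisibility of $A_0(X)$ is a classical chain-of-curves reduction to divisibility of Jacobians over algebraically closed fields, while the $\ell^r$-isomorphism follows from combining the Bockstein exact sequence in Lichtenbaum cohomology with the Suslin rigidity theorem recalled in the preliminaries.

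For the first assertion, one can reduce to the case where $X$ is integral and projective. Since $\CH_0$ depends only on the reduced structure and decomposes along connected components, we may assume $X$ connected and reduced; then for $X$ integral, Chow's lemma provides a proper birational surjection $X'\to X$ with $X'$ projective, and the induced surjection $A_0(X')\twoheadrightarrow A_0(X)$ reduces the claim to the projective case. For $X$ integral projective over $k=\bar{k}$, every closed point is $k$-rational, so $A_0(X)$ is generated by differences $[p]-[q]$. Any two such points are contained in some integral projective curve $C\subset X$, obtained by successive hyperplane sections, and the pushforward along $\widetilde{C}\to C\hookrightarrow X$ of the normalization exhibits $[p]-[q]$ as the image of an element of $A_0(\widetilde{C})\simeq \text{Jac}(\widetilde{C})(k)$. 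Since $k$ is algebraically closed, the Jacobian is a divisible group, and hence $A_0(X)$ is divisible.

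For the second assertion, the surjectivity of multiplication by $\ell^r$ on $A_0(X)$ is immediate from the divisibility just established, so it suffices to show $A_0(X)\{\ell\}=0$, equivalently $\CH_0(X)\{\ell\}=0$ (using that the $\Z$-summand generated by a $k$-rational point is torsion-free). Applying the Suslin rigidity theorem to the smooth variety $X$ of dimension $d$ over $\bar{k}$, we obtain a comparison isomorphism $\CH^d(X)\xrightarrow{\simeq}\CH^d_L(X)$. From the distinguished triangle $\Z(d)_\et\xrightarrow{\ell^r}\Z(d)_\et\to \Z/\ell^r(d)_\et\xrightarrow{+1}$, one extracts the Bockstein short exact sequence
\begin{align*}
    0 \to H^{2d-1}_L(X,\Z(d))/\ell^r \to H^{2d-1}_L(X,\Z/\ell^r(d)) \to \CH^d_L(X)[\ell^r] \to 0,
\end{align*}
and for $\ell\neq p$ the rigidity identification $(\Z/\ell^r)_X(d)_\et\simeq \mu_{\ell^r}^{\otimes d}$ recalled in the preliminaries gives $H^{2d-1}_L(X,\Z/\ell^r(d))\simeq H^{2d-1}_\et(X,\mu_{\ell^r}^{\otimes d})$. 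Passing to the direct limit in $r$, which is exact and commutes with étale cohomology on noetherian schemes, the middle term becomes $H^{2d-1}_\et(X,\Q_\ell/\Z_\ell(d))$, which vanishes by hypothesis. Hence $\CH^d_L(X)\{\ell\}=\varinjlim_r \CH^d_L(X)[\ell^r]=0$, yielding the required injectivity.

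The main obstacle is really the geometric content of the first part: producing curves through arbitrary pairs of closed points on a possibly singular complete scheme and controlling the pushforward of $A_0$ requires care with Chow's lemma, connectedness, and normalization. Once this is in place, the second part is a rather formal consequence of the Suslin rigidity comparison with Lichtenbaum cohomology and the Bockstein long exact sequence.
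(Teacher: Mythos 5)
Your proof is correct and follows essentially the same route as the paper: the divisibility of $A_0(X)$ via pushforward from Jacobians of curves (the argument behind the cited \cite[Example 1.6.6]{Fulton}, which you simply spell out with the Chow's lemma and normalization reductions), and the $\ell$-torsion vanishing via the identification $\CH^d(X)\simeq \CH^d_L(X)$ over $\bar{k}$ together with $\CH^d_L(X)\{\ell\}\simeq H^{2d-1}_\et(X,\Q_\ell/\Z_\ell(d))$. The only difference is that you derive this last isomorphism directly from the Bockstein triangle and rigidity instead of quoting \cite[Proposition 3.1]{RS}, which is a harmless unpacking of the same fact.
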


\begin{proof}
The first statement is classic, see \cite[Example 1.6.6]{Fulton}, the argument is as follows: since $A_0(X)$ is generated by the image of the maps of the form: 
\begin{align*}
    f_*: A_0(C) &\to A_0(X) \\
    [P]-[Q]& \mapsto f_*([P]-[Q])
\end{align*}
where $f:C \to X$ is a smooth projective curve with $P$, $Q$ points in $C$. Since $A_0(C)\simeq J(C)$ and the Jacobian of a smooth projective curve is divisible over an algebraically closed field $k$, we obtain the desired result. We prove the second statement. Note that by assuming that $k$ is an 
algebraically closed field, we get that $\CH^d(X)\simeq \CH^d_L(X)$ and that $\CH^d_L(X)\{\ell\}\simeq H^{2d-1}_\et(X,\Q_\ell/\Z_\ell(d))$. So
 \begin{align*}
      \CH_0(X)\{\ell\}=\CH^{2d}(X)\{\ell\}\simeq H^{2d-1}_\et(X,\Q_\ell/\Z_\ell(d))=0
 \end{align*}
and $\CH_0(X)\{\ell\}\simeq A_0(X)\{\ell\}$, so one deduces that under the assumption, $A_0(X)$ is $\ell^r$-divisible for any $r>0$.
\end{proof}

\begin{remark}
    Notice that with the previous statement, if $H^{2d-1}_\et(X,\Q_\ell/\Z_\ell(d))=0$ for all $\ell$ different from the characteristic of $k$, we conclude that $A_0(X)$ is uniquely $\ell^r$-divisible.
\end{remark}

For $X$ a smooth and projective variety over a field $k$ of exponential characteristic equal to $p$, we set 
\begin{align*}
    A_{0, \et}(X):= \text{ker}\left\{\deg_\et:\CH^{d}_\et(X) \to \Z[1/p] \right\}.
\end{align*}
Notice that if $k$ is algebraically closed then we have an isomorphism $ A_{0, \et}(X)\simeq A_0(X)[1/p]$.

\begin{prop}\label{propo}
Let $X$ be a geometrically integral smooth projective variety of dimension $d\geq 2$ over a perfect field $k$ with $\text{cd}(k)\leq 1$ and $p$ the exponential characteristic of $k$. Let $\bar{k}$ be the algebraic closure of $k$ and assume that  $H^{2d-1}_\et(X_{\bar{k}},\Q_\ell/\Z_\ell(d))=0$ for every prime $\ell\neq \text{char}(k)$, then $\deg_\et:\CH^d_\et(X)\to \Z[1/p]$ is surjective.
\end{prop}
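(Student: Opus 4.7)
The strategy combines the factorization of $\deg_\et$ through $E^{0,2d}_\infty$ from Proposition \ref{rem} with a vanishing argument powered by the hypothesis $\text{cd}(k)\leq 1$. Consider the Hochschild-Serre spectral sequence
$$E^{r,q}_2(d)=H^r(G_k,H^q_L(X_{\bar{k}},\Z[1/p](d)))\Longrightarrow H^{r+q}_L(X,\Z[1/p](d)).$$
As recorded in the bullet points preceding Example \ref{remdeg}, the assumption $\text{cd}(k)\leq 1$ gives $E^{r,q}_2(d)=0$ whenever $r\geq 2$ and $q\neq 2d$, because $H^q_L(X_{\bar{k}},\Z[1/p](d))$ decomposes as a uniquely divisible group plus torsion and both contributions to higher Galois cohomology vanish. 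Consequently, all differentials $d_r:E^{0,2d}_r\to E^{r,2d-r+1}_r$ with $r\geq 2$ have zero target, so $E^{0,2d}_\infty=E^{0,2d}_2=\CH^d(X_{\bar{k}})[1/p]^{G_k}$.

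By Proposition \ref{rem}, the \'etale degree map then factors as
$$\CH^d_\et(X)\twoheadrightarrow E^{0,2d}_\infty=\CH^d(X_{\bar{k}})[1/p]^{G_k}\xrightarrow{\deg}\Z[1/p],$$
the first arrow being surjective by convergence of the spectral sequence. Surjectivity of $\deg_\et$ therefore reduces to surjectivity of the classical degree restricted to Galois-invariants. Consider the short exact sequence of $G_k$-modules
$$0\to A_0(X_{\bar{k}})[1/p]\to \CH_0(X_{\bar{k}})[1/p]\xrightarrow{\deg}\Z[1/p]\to 0,$$
where surjectivity of $\deg$ uses that $X$ is geometrically integral, hence $X_{\bar{k}}$ carries a rational point. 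The long exact sequence in Galois cohomology yields the desired surjection onto $\Z[1/p]$ as soon as $H^1(G_k,A_0(X_{\bar{k}})[1/p])=0$.

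For this vanishing, the hypothesis $H^{2d-1}_\et(X_{\bar{k}},\Q_\ell/\Z_\ell(d))=0$ for every $\ell\neq \text{char}(k)$ together with the preceding lemma (the one just before the proposition) imply that $A_0(X_{\bar{k}})$ is uniquely $\ell$-divisible for every such $\ell$. After inverting $p$, multiplication by $p$ is trivially invertible as well, so $A_0(X_{\bar{k}})[1/p]$ is uniquely divisible as a (discrete) $G_k$-module. The opening lemma of Section 2 on continuous cohomology of uniquely divisible modules then gives $H^n(G_k,A_0(X_{\bar{k}})[1/p])=0$ for all $n\geq 1$, completing the proof. The main obstacle is the clean identification of $E^{0,2d}_\infty$ with $\CH^d(X_{\bar{k}})[1/p]^{G_k}$, which depends on both the bound on $\text{cd}(k)$ and the uniquely-divisible-plus-torsion decomposition of $H^q_L(X_{\bar{k}},\Z[1/p](d))$ for $q\neq 2d$; once this identification is secured, the rest is a short diagram chase.
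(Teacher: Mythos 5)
Your proposal is correct and follows essentially the same route as the paper's proof: the Hochschild--Serre spectral sequence with the $\text{cd}(k)\leq 1$ vanishing to identify $E^{0,2d}_\infty$ with $\CH^d(X_{\bar{k}})[1/p]^{G_k}$ (the paper phrases this as the short exact sequence $0\to E^{1,2d-1}_\infty\to \CH^d_L(X)\to E^{0,2d}_\infty\to 0$), followed by the long exact sequence for $0\to A_0(X_{\bar{k}})\to \CH_0(X_{\bar{k}})\to \Z\to 0$ and the vanishing of $H^1(G_k,A_0(X_{\bar{k}})[1/p])$ by unique divisibility. The only cosmetic difference is that you invoke the divisibility lemma preceding the proposition where the paper cites \cite[Proposition 3.1(a)]{RS} directly for torsion-freeness, and you treat both characteristics uniformly with $\Z[1/p]$ coefficients rather than splitting into cases.
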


\begin{proof}
First assume that $\text{char}(k)=0$, then $\CH^n_L(X)\simeq \CH^n_\et(X)$ for all $n\in \N$. Using the notation given in Lemma \ref{proHS}, if $\text{cd}(k)\leq 1$, then $E^{2,s}_2(n)=0$ for $1<s<2n$. Thus, using the characterisations of the $E_\infty$-terms of the spectral sequence given in Example \ref{remdeg}, we obtain a short exact sequence
$0\to H^1(G,H^{2n-1}_L(X_{\bar{k}},\Z(n)))\to \CH^n_L(X)\to \CH^n_L(X_{\bar{k}})^{G_k}\to 0$. For $n=d$
we have that $\CH^d_L(X)\to \CH^d_L(X_{\bar{k}})^{G_k}$ is always surjective, now consider the short exact sequence
$$0\to A_{0}(X_{\bar{k}})\to \CH^d_L(X_{\bar{k}})\xrightarrow{\deg_\et} \Z\to 0$$ where $A_{0}(X_{\bar{k}}):= \text{ker}\left\{\deg_\et:\CH^{d}_\et(X_{\bar{k}}) \to \Z\right\}$, i.e. the numerically trivial zero cycles of $X_{\bar{k}}$,  which induces a long exact sequence
\begin{align*}
    0\to A_{0}(X_{\bar{k}})^{G_k}\to \CH^d_L(X_{\bar{k}})^{G_k}\xrightarrow{\widetilde{\deg}} \Z\to H^1(G_k,A_{0}(X_{\bar{k}}))\to \ldots
\end{align*}
Where the factor $\Z$ is obtained by using the fact that  $\CH^0(\spc(\bar{k}))^{G_k}\simeq \CH^0(\spc(k))$. By \cite[Proposition 3.1(a)]{RS} we have that $\CH^d_L(X_{\bar{k}})\{\ell\}\simeq  H^{2d-1}_\et(X_{\bar{k}},\Q_\ell/\Z_\ell(d))$ so $A_{0}(X_{\bar{k}})_\text{tors}\simeq \CH^d_L(X_{\bar{k}})_\text{tors}=0$ and then the group $A_{0}(X_{\bar{k}})$ is uniquely divisible. Therefore we have that $H^1(G,A_{0}(X_{\bar{k}}))=0$ and consequently the map $\deg_\et: \CH^d_L(X)\to \CH^d_L(X_{\bar{k}})^{G_k}\to \Z$ is surjective.

Now suppose that $\text{char}(k)=p>1$, in which case it is necessary to invert the exponential characteristic of the field. For an abelian group $A$ we set $A[1/p]:= A \otimes_\Z \Z[1/p]$. Setting $s \neq 2d$ we have that $H^s_L(X_{\bar{k}},\Z(d))$ is an extension of a divisible group $D$ by a torsion group $T$. Using the convention for tensor products, we see that 
\begin{align*}
    0 \to D \to H^s_L(X_{\bar{k}},\Z(d))[1/p] \to T[1/p] \to 0
\end{align*}
where the last map kills the $p$-primary part of the torsion group $T$. The spectral sequence also holds for the complex  of \'etale sheaves $\Z[1/p](n)_\et$, for the convergence we use the same arguments with the exact triangle $\displaystyle \Z[1/p]_X(d)_\et \to \Q_X(d)_\et\to \bigoplus_{\ell \neq \text{char}(k)} \Q_\ell/\Z_\ell(d)\xrightarrow{+1}$ therefore we have a similar short exact sequence
$0\to H^1(G_k,H^{2n-1}_L(X_{\bar{k}},\Z[1/p](n)))\to \CH^n_L(X)[1/p]\to \CH^n_L(X_{\bar{k}})[1/p]^{G_k}\to 0$ and also $0\to A_{0}(X_{\bar{k}})[1/p]\to \CH^d_L(X_{\bar{k}})[1/p]\xrightarrow{\deg_L} \Z[1/p]\to 0$ therefore we can conclude.
\end{proof}

\begin{theorem}\label{teo1}
There exist a smooth projective surface $S$ over a field $k$, with $\text{char}(k)=0$ of cohomological dimension $\leq 1$, without zero cycles of degree one but $I_\et(X)=1$.   
\end{theorem}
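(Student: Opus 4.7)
The plan is to take $S$ to be one of the del Pezzo surfaces constructed by Colliot-Thélène and Madore in \cite{C-TM} and verify that it meets the hypotheses of Proposition \ref{propo}. More precisely, \cite[Théorèmes 1.1 and 1.2]{C-TM} produce, for each $d \in \{2,3,4\}$, a smooth projective del Pezzo surface $S$ of degree $d$ defined over a field $k$ with $\text{char}(k)=0$ and $\text{cd}(k)\leq 1$ that carries no zero cycle of degree one; in particular $I(S)>1$, and $k$ is perfect. So the only remaining point is to show $I_\et(S)=1$.

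To apply Proposition \ref{propo} with $\dim S = 2$, I must check that $S$ is geometrically integral (immediate from the definition of a del Pezzo surface) and that $H^{3}_\et(S_{\bar{k}},\mathbb{Q}_\ell/\mathbb{Z}_\ell(2))=0$ for every prime $\ell$. For this last point I would use the fact that every del Pezzo surface is geometrically rational: over $\bar{k}$, $S_{\bar{k}}$ is either $\mathbb{P}^2$ blown up at $9-d$ points in sufficiently general position or (in degree $8$) $\mathbb{P}^1\times \mathbb{P}^1$. Hence $\CH_0(S_{\bar{k}})\simeq \mathbb{Z}$ and in particular $\CH^2(S_{\bar{k}})_\text{tors}=0$. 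Since $\bar{k}$ is algebraically closed, the comparison map gives an isomorphism $\CH^2(S_{\bar{k}})\xrightarrow{\simeq}\CH^2_L(S_{\bar{k}})$ (see the remark following Lemma \ref{lemvan}), so $\CH^2_L(S_{\bar{k}})_\text{tors}=0$. Combining this with the identification $\CH^2_L(S_{\bar{k}})\{\ell\}\simeq H^{3}_\et(S_{\bar{k}},\mathbb{Q}_\ell/\mathbb{Z}_\ell(2))$ from \cite[Proposition 3.1(a)]{RS} yields the desired vanishing.

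With the hypotheses of Proposition \ref{propo} in place, the étale degree map $\deg_\et:\CH^2_\et(S)\to \mathbb{Z}$ is surjective, which is exactly $I_\et(S)=1$. The only genuinely non-trivial step is the verification of the vanishing hypothesis in Proposition \ref{propo}, and the bridge is precisely the geometric rationality of del Pezzo surfaces, which forces all torsion in $\CH^2(S_{\bar{k}})$ to disappear. Everything else is an appeal to the cited results.
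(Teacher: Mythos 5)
Your proposal is correct and follows the paper's strategy: take the del Pezzo surfaces of Colliot-Th\'el\`ene--Madore and feed them into Proposition \ref{propo}. The one point where you diverge is the verification of the hypothesis $H^3_\et(S_{\bar{k}},\Q_\ell/\Z_\ell(2))=0$. The paper argues that $H^1(S_{\bar{k}},\mathcal{O}_{S_{\bar{k}}})=H^2(S_{\bar{k}},\mathcal{O}_{S_{\bar{k}}})=0$ forces $\text{Alb}(S_{\bar{k}})=0$ and then invokes Roitman's theorem to kill the torsion of $\CH^2(S_{\bar{k}})\simeq \CH^2_L(S_{\bar{k}})$, whereas you use that a del Pezzo surface is geometrically rational, so $\CH_0(S_{\bar{k}})\simeq \Z$ by birational invariance of $\CH_0$, hence torsion-free, and then read off the vanishing of $H^3_\et$ from the isomorphism $\CH^2_L(S_{\bar{k}})\{\ell\}\simeq H^3_\et(S_{\bar{k}},\Q_\ell/\Z_\ell(2))$ of Rosenschon--Srinivas; this is the same isomorphism the paper uses inside the proof of Proposition \ref{propo}, applied in the reverse direction, which is legitimate since it is an isomorphism. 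Your route avoids Roitman's theorem but is tied to the rationality of del Pezzo surfaces; the paper's Albanese/Roitman argument applies verbatim to any surface with $H^1(S,\mathcal{O}_S)=0$, which is what its subsequent remark exploits. One small slip worth fixing: the identification $\CH^2(S_{\bar{k}})\simeq \CH^2_L(S_{\bar{k}})$ over $\bar{k}$ comes from the Suslin rigidity quasi-isomorphism $\Z_X(n)\to R\rho_*\Z_X(n)_\et$ for $n\geq \dim X$, stated in the paragraph \emph{before} Lemma \ref{lemvan}, not from the discussion following it; this is only a citation imprecision, not a gap.
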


\begin{proof}

By \cite[Th\'eor\`eme 1.1]{C-TM} and \cite[Th\'eor\`eme 1.2]{C-TM} there exist del Pezzo surfaces of degree 2, 3 and 4 over a field $k$ of characteristic zero and $\text{cd}(k)=1$ without zero cycles of degree 1. Let $S$ be one of such surfaces of degree $d\in \{2,3,4\}$. Since $S$ is a del Pezzo surface, for every field extension $K/k$ the variety $S_K$ is also a del Pezzo surface of degree $d$, in particular for $K=\bar{k}$. Since $S_{\bar{k}}$ is del Pezzo, we have that $H^1(S_{\bar{k}},\mathcal{O}_{S_{\bar{k}}})=H^2(S_{\bar{k}},\mathcal{O}_{S_{\bar{k}}})=0$ so $\text{Alb}(S_{\bar{k}})=0$. Since we are working on an algebraically closed field, $\CH^2(S_{\bar{k}})\simeq \CH^2_L(S_{\bar{k}})$ and then by Roitman's theorem which says that $\CH_0(S_{\bar{k}})_\text{tors}\simeq \text{Alb}(S_{\bar{k}})_\text{tors}$, for a proof see \cite{rojt} or \cite{BlochRo}, $\CH^2_L(S_{\bar{k}})_\text{tors}=N^2(S_{\bar{k}})_\text{tors}=0$, so the group $N^2(S_{\bar{k}})$ is uniquely divisible, and hence by Proposition \ref{propo} the map $\CH^2_L(S)\to \CH^2_L(S_{\bar{k}})^{G_k}\to \Z$ is surjective, while $\CH^2(S)\to \Z$ is not a surjective map.  
\end{proof}

\begin{theorem}\label{teo2}
    For each prime $p\geq 5$ there exist a field $F$ such that $\text{char}(k)=0$ with $\text{cd}(F)=1$ and a smooth projective hypersurface $X \subset \Pro^p_F$ with $I_\et(X)=1$ but index $I(X)=p$.
\end{theorem}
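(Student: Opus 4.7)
The plan is to follow the template of the proof of Theorem \ref{teo1}: produce a candidate variety whose classical index is already known to be greater than $1$, and then verify the hypothesis of Proposition \ref{propo} to force $I_\et(X) = 1$. Concretely, I would invoke \cite[Theorem 5.1]{CT}, which for each prime $p \geq 5$ provides a field $F$ of characteristic zero with $\text{cd}(F) = 1$ together with a smooth projective hypersurface $X \subset \Pro^p_F$ of index $I(X) = p$. Set $d = \dim X = p - 1 \geq 4$, and note that the exponential characteristic of $F$ equals $1$, so the codomain of $\deg_\et$ is $\Z$. Being a smooth hypersurface in $\Pro^p_F$, $X$ is geometrically integral, so the hypothesis of Proposition \ref{propo} that remains to be checked is the vanishing of $H^{2d-1}_\et(X_{\bar F},\Q_\ell/\Z_\ell(d))$ for every prime $\ell$.

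For this, I would argue that $\text{Alb}(X_{\bar F}) = 0$, exactly as in the proof of Theorem \ref{teo1}. Since $X_{\bar F}$ is a smooth hypersurface of dimension $d \geq 2$ in $\Pro^p_{\bar F}$, the short exact sequence $0 \to \mathcal{O}_{\Pro^p}(-e) \to \mathcal{O}_{\Pro^p} \to \mathcal{O}_{X_{\bar F}} \to 0$ (where $e$ is the degree of $X$) combined with the cohomology of $\mathcal{O}_{\Pro^p}(k)$ gives $H^1(X_{\bar F}, \mathcal{O}_{X_{\bar F}}) = 0$, hence $\text{Pic}^0(X_{\bar F}) = 0$ and $\text{Alb}(X_{\bar F}) = 0$. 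By Roitman's theorem over the algebraically closed field $\bar F$, one obtains $\CH_0(X_{\bar F})_\text{tors} \simeq \text{Alb}(X_{\bar F})_\text{tors} = 0$. Since $d = \dim X_{\bar F}$, the comparison map $\CH^d(X_{\bar F}) \to \CH^d_L(X_{\bar F})$ is an isomorphism, so $\CH^d_L(X_{\bar F})_\text{tors} = 0$. By \cite[Proposition 3.1(a)]{RS}, as already used in the proof of Theorem \ref{teo1}, $\CH^d_L(X_{\bar F})\{\ell\} \simeq H^{2d-1}_\et(X_{\bar F}, \Q_\ell/\Z_\ell(d))$, so this group vanishes for every prime $\ell$.

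Proposition \ref{propo} therefore applies and yields that $\deg_\et : \CH^d_\et(X) \to \Z$ is surjective, which is exactly the statement $I_\et(X) = 1$. Combined with $I(X) = p$ from \cite[Theorem 5.1]{CT}, this proves the theorem. The substantive content of the argument is packaged into \cite[Theorem 5.1]{CT} and Proposition \ref{propo}; the only verification needed here is the vanishing of $\text{Alb}(X_{\bar F})$, which is an immediate consequence of the Lefschetz hyperplane theorem once one notes that $\dim X = p - 1 \geq 4$. There is no real obstacle in this argument beyond recognizing that the hypersurfaces constructed in \cite{CT} fall within the scope of Proposition \ref{propo}.
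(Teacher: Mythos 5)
Your proposal is correct, and its skeleton (take the hypersurface of index $p$ from \cite[Theorem 5.1]{CT}, then apply Proposition \ref{propo}) is the same as the paper's; the difference lies in how you verify the hypothesis $H^{2d-1}_\et(X_{\bar F},\Q_\ell/\Z_\ell(d))=0$. The paper does this directly with the weak Lefschetz theorem in \'etale cohomology: for a geometrically integral smooth hypersurface $X\subset \Pro^{n+1}$ of dimension $n\geq 2$ one has $H^{2n-1}_\et(X_{\bar k},\mu_{\ell^r}^{\otimes n})\simeq H^{2n+1}_\et(\Pro^{n+1}_{\bar k},\mu_{\ell^r}^{\otimes n+1})=0$, and passes to the colimit. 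You instead replay the proof of Theorem \ref{teo1}: $H^1(X_{\bar F},\mathcal{O}_{X_{\bar F}})=0$ from the structure sequence of the hypersurface, hence $\mathrm{Alb}(X_{\bar F})=0$; Roitman then kills $A_0(X_{\bar F})_{\mathrm{tors}}$, the identification $\CH^d(X_{\bar F})\simeq \CH^d_L(X_{\bar F})$ over $\bar F$ transfers this to Lichtenbaum groups, and the isomorphism $\CH^d_L(X_{\bar F})\{\ell\}\simeq H^{2d-1}_\et(X_{\bar F},\Q_\ell/\Z_\ell(d))$ of \cite[Proposition 3.1(a)]{RS} (used in the opposite direction from Proposition \ref{propo}) gives the required vanishing. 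Both verifications are valid here (char $0$, $d=p-1\geq 4$, and smooth hypersurfaces of positive dimension are geometrically integral, so Proposition \ref{propo} applies); the paper's route is lighter, purely cohomological, and works verbatim in positive characteristic away from the characteristic, whereas yours leans on Roitman's theorem and the Rosenschon--Srinivas torsion identification, machinery already present in the paper, and has the mild virtue of making Theorems \ref{teo1} and \ref{teo2} run on literally the same engine. The only cosmetic discrepancies are the citation label ([CT, Theorem 1.1] in the paper's proof versus Theorem 5.1 in the statement) and your closing remark attributing $H^1(\mathcal{O}_X)=0$ to the Lefschetz hyperplane theorem when you in fact proved it by coherent cohomology; neither affects correctness.
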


\begin{proof}
Consider $n \geq 2$, a field $k$ such that $\text{cd}(k)\leq 1$ and a hypersurface $X\subset \Pro^{n+1}_k$ which is geometrically integral. Consider the hypersurface $X_{\bar{k}}\subset \Pro^{n+1}_{\bar{k}}$. Then, by Lefschetz's theorem \cite[Theorem 7.1]{Mil80}, we have that $$H^{2n-1}_\et(X_{\bar{k}},\mu^{\otimes n}_{\ell^r})\simeq H^{2n+1}_\et(\Pro^{n+1}_{\bar{k}},\mu^{\otimes n+1}_{\ell^r})=0$$
for all $\ell \neq \text{char}(k)$, so $H^{2n-1}_\et(X_{\bar{k}},\Q_\ell/\Z_\ell(n))=0$, so by Proposition \ref{propo} the morphism $\deg_\et:\CH^n_\et(X)\to \Z$ is surjective. If we now fix a prime number $p\geq 5$, then by \cite[Theorem 1.1]{CT} there exists a field $F$ with $\text{cd}(F)=1$ and a smooth projective hypersurface $X \subset \Pro_F^p$ with index equal to $p$.
\end{proof}

\begin{remark}\label{examC}
\begin{enumerate}
    \item Assume that $k$ is a field with $\text{cd}(k)\leq 1$. Let $S$ be a smooth geometrically integral $k$ surface with $H^1(S,\mathcal{O}_S)=0$, so $\text{Alb}(S)=0$, so again by Roitman's theorem $\CH^2_L(S_{\bar{k}})$ is torsion free and then uniquely divisible, so $H^1(G,N^d(S_{\bar{k}}))=0$ and then $\CH^2_L(S)\to \Z$ is surjective. In general, if $N^d(X_{\bar{k}})$ is a divisible group, then $\CH^d_L(X)\to \Z$ is surjective.
    \item  The hypothesis of Proposition \ref{propo}, all hypersurfaces and complete intersection varieties have an étale zero cycle of degree 1 if the base field $k$ has cohomological dimension at most 1. Using \cite[Proposition 4.4]{ELW},  we get more examples of hypersurfaces with index greater than 1 but with surjective étale degree map, for example when $k=\C((t))$ and $X$ is given by $\{Y^4+tZ^4+t^2T^4+t^3W^4=0\}$.
\end{enumerate} 

\end{remark}

\subsection{\'Etale degree of Severi-Brauer varieties}

In the following, we will see non-trivial examples where the \'etale degree map is not surjective.  To do this, we will study the Lichtenbaum cohomology groups of Severi-Brauer varieties by giving an explicit characterisation of the zero cycles of the Lichtenbaum groups of Severi-Brauer varieties.

\begin{defi}
  A variety $X$ over a field $k$ is called a Severi-Brauer variety of dimension $n$ if and only if $X_{\bar{k}}\simeq \Pro^n_{\bar{k}}$. If $X$ is a Severi-Brauer variety of dimension $n$ and there exists an algebraic extension $k\subset k' \subset \bar{k}$ such that $X_{k'}\simeq \Pro^n_{k'}$ we say that $X$ splits over $k'$.
\end{defi}

If for a field $k$, such that, the Brauer group $\text{Br}(k)=0$, there exists a unique Severi-Brauer variety modulo isomorphisms to $\Pro_k^n$. Some cases of such  fields are the following:
\begin{itemize}
    \item a field $k$ with $\text{cd}(k)\leq 1$. In this category we can find fields such as algebraically and separably closed fields, finite fields, extensions of transcendence degree 1 of an algebraically closed field.
    \item if $k$ is a field extension of $\Q$ containing all the roots of unity, see \cite[\S 7]{ser} and \cite[II.\S 3, Proposition 9]{ser2}.
\end{itemize}

\begin{lemma}\label{inv}
    Let $X$ be a Severi-Brauer variety of dimension $d$ over $k$ which splits over a field $k'$. Then for all $0\leq n \leq d$ the group $\CH^n(X_{k'})\simeq \CH^n(\Pro^d_{k'})$ is a trivial $\text{Gal}(k'/k)$-module.
\end{lemma}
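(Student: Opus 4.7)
The strategy is to observe that $\CH^n(X_{k'})\simeq \Z$, so the Galois action is forced to be by $\pm 1$, and then use effectivity to rule out the sign change. Fix a $k'$-isomorphism $\varphi:X_{k'}\xrightarrow{\sim}\Pro^d_{k'}$ (which exists by hypothesis). For any $0\leq n\leq d$, the class $[\varphi^{-1}(L)]$ of the preimage of a codimension-$n$ linear subspace $L\subset\Pro^d_{k'}$ is a positive generator of $\CH^n(X_{k'})\simeq\Z$.

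Next, I unpack the Galois action: for $\sigma\in G:=\text{Gal}(k'/k)$, it acts on $X_{k'}=X\times_k\spc(k')$ through the scheme automorphism $\sigma_X:=\text{id}_X\times\spc(\sigma)$. Functoriality of Chow groups under scheme isomorphisms produces a group automorphism $\sigma_X^*$ of $\CH^n(X_{k'})$. Since any group automorphism of $\Z$ equals $\pm\text{id}$, it only remains to exclude the sign change. Because $\sigma_X$ sends integral closed subschemes to integral closed subschemes, $\sigma_X^*$ preserves the cone of effective cycle classes; applied to the effective generator $[\varphi^{-1}(L)]$, this means its image is a \emph{non-negative} multiple of the generator, forcing $\sigma_X^*=\text{id}$.

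The only potential subtlety is that the Galois action transported from $X_{k'}$ to $\Pro^d_{k'}$ through $\varphi$ need not coincide with the standard Galois action on $\Pro^d_{k'}$ (indeed, it is precisely the obstruction to $X$ being $k$-isomorphic to projective space). This, however, is not a real obstacle: the argument above only needs the action to be by some scheme automorphism of $X_{k'}$, so that it induces a positivity-preserving automorphism of the rank-one group $\CH^n(X_{k'})$. No further difficulty is anticipated.
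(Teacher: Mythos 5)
Your proof is correct, and it rests on the same core mechanism as the paper's: $\CH^n(X_{k'})\simeq\Z$, so each $\sigma$ acts by $\pm\mathrm{id}$, and a positivity argument excludes $-\mathrm{id}$. The difference lies in how positivity is implemented. You argue uniformly in every codimension, using that $\sigma_X=\mathrm{id}_X\times\spc(\sigma)$ carries integral closed subschemes to integral closed subschemes and hence preserves the effective cone, whose intersection with $\CH^n(X_{k'})=\Z\cdot[\varphi^{-1}(L)]$ consists of the non-negative multiples of the generator (an effective codimension-$n$ cycle on $\Pro^d_{k'}$ has non-negative degree), so the effective generator cannot be sent to its negative. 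The paper instead treats $n=1$ first, excluding the sign change on $\text{Pic}(\Pro^d_{k'})$ because the line bundle of class $1$ has global sections while that of class $-1$ does not, and then transfers triviality to $\CH^n$ via the isomorphism $\CH^1\simeq\CH^n$ given by intersecting with powers of the (now Galois-fixed) hyperplane class. Your route avoids this two-step reduction and is, if anything, more self-contained; and your closing remark correctly isolates and dismisses the only genuine subtlety, namely that the action transported to $\Pro^d_{k'}$ via $\varphi$ need not be the standard one --- the argument only uses that $\sigma$ acts through an abstract scheme automorphism of $X_{k'}$.
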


\begin{proof}
    Let us prove the statement by induction. The case $\CH^0(\Pro^d_{k'})$ is trivial and the case when $n=1$ is covered in the proof of \cite[Proposition 5.4.4]{GS06}. Suppose $k'/k$ is a finite Galois extension and let $G:=\text{Gal}(k'/k)$ be its associated Galois group. We have that $\text{Pic}(X_{k'})\simeq \text{Pic}(\Pro^d_{k'})\simeq \Z$, there are two possibilities for the action of $G$ on $\text{Pic}(X_{k'})$: the trivial action and the action of permuting $1$ with $-1$. The class of the line bundle in $\text{Pic}(\Pro^d_{k'})$ associated with $1$ has a global section, while the line bundle associated with $-1$ does not, so the only action you can get is the trivial one. For $n = 2$ we first consider the generator $\xi_{k'} \in \CH^1(\Pro_{k'}^d)$ and the isomorphism of Chow groups induced by the intersection with the hyperplane $\xi_{k'}$, which has the form
 $\displaystyle \CH^1(\Pro_{k'}^d) \xrightarrow[\simeq]{\cdot \xi_{k'}}\CH^2(\Pro_{k'}^d)$, so we have that
 \begin{align*}
     \CH^1(\Pro_{k}^d)\simeq \CH^1(\Pro_{k'}^d)^G\xrightarrow{(\cdot \xi_{k'})^G}\CH^2(\Pro_{k'}^d)^G
 \end{align*}
 is an isomorphism, which tells us that $G$ trivially acts on $\CH^2(\Pro_{k'}^d)$. The same kind of argument applies to $n\geq 3$. 
\end{proof}

\begin{remark}\label{remGI}
\begin{enumerate}
    \item We can similarly deduce that for all $m, n \in \N$ the group $\text{Pic}(\Pro^m_{\bar{k}}\times \Pro^n_{\bar{k}})\simeq \Z[\alpha]\oplus \Z[\beta]$, where $\alpha$ and $\beta$ are the generators of $\text{Pic}(\Pro^m_{\bar{k}})$ and $\text{Pic}(\Pro^n_{\bar{k}})$ respectively, is a trivial $G_k$-module.
    \item Let $k$ be a perfect field of characteristic exponent equals to $p$ and let $X$ be a Severi-Brauer variety of dimension $d$ over $k$. The fact $X_{\bar{k}}\simeq \Pro^d_{\bar{k}}$ simplifies several computations for the Hochschild-Serre spectral sequence given in Lemma \ref{proHS}. For instance if $m\neq 2n+1$, then for $\ell \neq p$ we can characterise the $\ell$-primary torsion groups as follows
\begin{align*}
    H^m_L(X_{\bar{k}},\Z(n))\{\ell\}\simeq H^{m-1}_\et(\Pro_{\bar{k}}^d,\Q_\ell/\Z_\ell(n)) \simeq \begin{cases}
        \Q_\ell/\Z_\ell \text{ if $m$ is odd}\\
        0 \text{ otherwise.}
    \end{cases}
\end{align*}
Therefore, for $m<2n$ and even the group $H^m_L(X_{\bar{k}},\Z(n))$ is uniquely divisible, so some of the $E_2$-terms associated with the Hochschild-Serre spectral sequence of $H^{r+s}_L(X,\Z[1/p](n))$ given in (\ref{spectralfieldf}) can be characterised as follows
\begin{align*}
    E^{r,s}_2(n) = \begin{cases}
        \quad H^s(\mathbb{P}_{\bar{k}},\Z[1/p](n))^{G_k} \quad \text{ if $r=0$,} \\
        H^r(G_k,H^{s-1}_\et(\mathbb{P}_{\bar{k}},(\Q/\Z)'(n))) \ \text{ if $s$ is odd and $r>0$,} \\
       \quad  0 \quad  \text{ if $s$ is even and $r>0$.}
    \end{cases}
\end{align*}
\end{enumerate}
\end{remark}

Now let $n=1$ and $X$ be a Severi-Brauer variety over $k$ of dimension $d$. Using the Hoschschild-Serre spectral sequence given in Lemma \ref{proHS} and Lemma \ref{inv}, we recover a classical result of Lichtenbaum, see \cite[Theorem 5.4.10]{GS06}, concerning the Picard group of $X$ and Brauer groups there is an exact sequence
\begin{align}\label{lich}
    0 \to \text{Pic}(X) \to  \text{Pic}(\Pro_{\bar{k}}^d)^{G_k} \xrightarrow{\delta} \text{Br}(k) \to \text{Br}(k(X)),
\end{align}
where the map $\delta$ sends 1 to the class of $X$ in $\text{Br}(k)$. For an arbitrary integer $n$, using the projective bundle formula (\ref{proBun}), we get
\begin{align*}
    H^m_L(\Pro_{\bar{k}}^d,\Z(n)) \simeq \bigoplus^{d}_{i=0} H^{m-2i}_L(\spc(\bar{k}),\Z(n-i)).
\end{align*}
As we change the base field to its algebraic closure, we have that $H^m_L(\Pro_{\bar{k}}^d,\Z(d)) \simeq H^m_M(\Pro_{\bar{k}}^d, \Z(d)) $ for all $m \in \Z$ and in particular $H^{m-2i}_L(\spc(\bar{k}),\Z(d-i)) =0$ if $m-d>i$. For example, if $m=2d-1$ then $H^{2d-1}_L(\Pro_{\bar{k}}^d,\Z(d))\simeq K_1^M(\bar{k})$ or for $m=2d-2$ we have $H^{2d-2}_L(\Pro_{\bar{k}}^d, \Z(d))\simeq K_2^M(\bar{k})$ and so for a Severi-Brauer variety and applying Lemma \ref{proHS} we get that $E^{1,2d-1}_2(d)=H^1(G_k,\bar{k}^*)=0$, by Hilbert 90, 
and $E^{2,2d-1}_2(d)=H^2(G_k,\bar{k}^*)=\text{Br}(k)$.

\begin{example}\label{exam}
Let $X$ be a Severi-Brauer variety of dimension $d=2$ over a perfect field $k$ with Galois group $G_k$. Using the previous characterisations through the projective bundle formula, we then describe the $E_2$-terms associated to $X$ in the following way:
    \begin{align*}
        & E^{r,0}_2(2)=H^r(G_k,H^0_M(\spc(\bar{k}),\Z(2))), \  E^{r,1}_2(2)=H^r(G_k,H^1_M(\spc(\bar{k}),\Z(2))), \\ 
        & E^{r,2}_2(2)=H^r(G_k,K_2^M(\bar{k})), \ E^{r,3}_2(2)=H^r(G_k,K_1^M(\bar{k})), \\ 
        & E^{r,4}_2(2)=H^r(G_k,\CH^2_L(\mathbb{P}^2_{\bar{k}})) \text{ and } E^{r,s}_2(2) = 0 \text{ for }s\geq 5.
    \end{align*}
By Remark \ref{remGI} (2), we have that $E^{r,0}_2(2)= E^{r,2}_2(2)=0$ for $r>0$, also $E^{1,3}_2(2)=0$ by Hilbert 90 theorem and $E^{2,3}_2(2)=\text{Br}(k)$, obtaining with this the following terms: by trivial reasons $E^{1,3}_\infty(2) = E^{2,2}_\infty(2) = E^{4,0}_\infty(2) =0$ and:
\begin{align*}
        E^{3,1}_\infty(2)=H^3(G_k,H^1_M(\bar{k},\Z(2)))/ \text{im}\left\{K_1^M(\bar{k})^{G_k}\to H^3(G_k,H^1_M(\bar{k},\Z(2)))\right\}
\end{align*}
The only remaining piece of the filtration of $\CH_L^2(X)$  that we need to study is $E^{0,4}_\infty(2)$.  By definition we have that
$E^{0,4}_3(2)=\text{ker}\left\{\CH^2(\Pro^2_{\bar{k}})^{G_k} \to \text{Br}(k)\right\}$ and as $E^{3,2}_2(2)=0$ then $ E^{0,4}_4(2)=E^{0,4}_3(2)$. Finally, we observe that 
 $E^{4,1}_4(2)=E^{4,1}_3(2)=E^{4,1}_2(2)$ and thus again by definition 
 \begin{align*}
     E^{0,4}_\infty(2) &= \text{ker}\left\{E^{0,4}_4(2) \to E^{4,1}_4(2)\right\} \\
     &=\text{ker}\left\{E^{0,4}_4(2) \to H^4(G_k,H^1_M(\spc(\bar{k}),\Z(2)))\right\}.
 \end{align*}
Therefore $\CH^2_L(X)$ fits into a short exact sequence given by the filtration induced by the 
Hochschild-Serre spectral sequence
\begin{align*}
    0 \to E^{3,1}_\infty(2) \to \CH^2_L(X) \to E^{0,4}_\infty(2) \to 0.
\end{align*}
\end{example}

If we want to generalise the previous example for higher dimension, we need to impose a condition on the cohomological dimension of $k$, as stated in the following theorem:

\begin{theorem}\label{teoSV}
    Let $X$ be a Severi-Brauer variety of dimension $d$ over a field $k$. Then the image of $\deg_\et:CH^d_\et(X)\to \Z$ is isomorphic to a subgroup of $\text{Pic}(X)$ and in particular $I_\et(X)\geq \text{ord}([X])$ where $[X]$ is the Brauer class of $X$ in $\text{Br}(k)$. Moreover, if $\text{cd}(k)\leq 4$, then the group $\CH_0^L(X)$  fits in the following exact sequence
    \begin{align*}
         0 \to E^{3,2d-1}_\infty(d) \to \CH^d_L(X) \to E^{0,2d}_\infty(d) \to 0.
    \end{align*}
with the $E_2$-terms come from the spectral sequence (\ref{spectralfield}), where $E^{0,2d}_\infty(d)= \text{ker}\left\{\CH^d(\mathbb{P}^d_{\bar{k}})^{G_k}\to \text{Br}(k)\right\}$. In particular one gets $I_\et(X)=\text{ord}([X])$.
\end{theorem}

\begin{proof}
Before we start, let us mention that for simplicity we will use the Hochschild-Serre spectral sequence (\ref{spectralfield}) to get the expressions for the Lichtnebaum cohomology groups, and then we will invert the characteristic $p$ to use étale motivic cohomology.

If we consider a smooth projective variety $X$ over $k$ and we fix an integer $1 \leq n \leq d$ from the projective bundle formula (\ref{proBun}) for Lichtenbaum cohomology we have that the isomorphism
\begin{align*}
    H^{2n-1}_L(\Pro_{\bar{k}}^d,\Z(n))\simeq \bigoplus_{i=0}^d H^{2(n-i)-1}_L(\bar{k},\Z(n-i))
\end{align*}
so in particular we obtain $H^{2n-1}_L(\Pro^d_{\bar{k}},\Z(n))\simeq K_1^M(\bar{k})$. Together with the Hochschild-Serre spectral sequence given in (\ref{spectralfield}) associated to $\CH^n_L(X)$ one gets $E^{0,2n}_2(n)\simeq \text{Br}(k)$ and consequently by looking at the following commutative diagram
\begin{equation}\label{commutative}
     \begin{tikzcd}
 \text{Pic}(\Pro_{\bar{k}}^d)^{G_k} \arrow{r}{\delta} \arrow{d}{\simeq} & \text{Br}(k) \arrow{d}{\simeq} \\
   \CH^n(\Pro_{\bar{k}}^d)^{G_k} \arrow{r}{d_2^{0,2n}(n)} & \text{Br}(k)
  \end{tikzcd}   
\end{equation}
we conclude that the term $E^{0,2n}_\infty(n)$ is isomorphic to a subgroup of $\text{Pic}(X)$.

Now let us set $n=d$, since the vertical arrows in the commutative diagram (\ref{commutative}) are isomorphisms, then $E^{0,2d}_3(d)= \text{ker}(d_2^{0,2d}(d))\simeq \text{ker}(\delta) \simeq \text{Pic}(X)$. Now by Proposition \ref{rem}, the map $\deg_\et$ factors through $E_\infty^{0,2d}(d)$ which is a subgroup of $E^{0,2d}_3(d)$, thus $I_\et(X)\geq \text{ord}([X])$ by the long exact sequence (\ref{lich}).

If we impose that $\text{cd}(k)\leq 4$, to get an expression for $\CH_0^L(X)$, we follow the arguments given in Example \ref{exam}.    Consider such $k$ and $X$, then by hypothesis $X_{\bar{k}}\simeq \mathbb{P}^d_{\bar{k}}$. Again by the projective bundle formula for Lichtenbaum cohomology we have that 
\begin{align*}
    H^m_L(\mathbb{P}^d_{\bar{k}},\Z(d))\simeq \bigoplus_{i=0}^d H^{m-2i}_L(\spc(\bar{k}),\Z(d-i)).
\end{align*}

Notice that by divisibility arguments we have 
 that $E^{p,2k}_2(d)=0$ for $0\leq k\leq d$ and $p>0$. Under the assumption about the cohomological dimension of $k$ we have that $E^{p,q}_2(d)=0$ for $p>4$ and $q<2n$, this results that $E^{0,2d}_\infty(d) \simeq E^{0,2d}_3(d)=\text{ker}\left\{\CH^d(\mathbb{P}^d_{\bar{k}})^{G_k}\to H^2(G_k,\bar{k}^*)\right\}$ and the other terms $E^{p,q}_2(d)$ with $p+q=2d$ that could not vanish are $E^{1,2d-1}_2(d)$ and $E^{3,2d-3}_2(d)$, but 
 $H^{2d-1}_L(\Pro_{\bar{k}}^d,\Z(d))\simeq K_1^M(\bar{k})$ therefore $E^{1,2d-1}_2(d)=0$. On the other hand, the remaining element of the filtration, which is $E^{3,2d-3}_\infty(d)= E^{3,2d-3}_4(d)$, is defined as
\begin{align*}
    E^{3,2d-3}_4(d)&=E^{3,2d-3}_3(d)/\text{im}\left\{E^{0,2d-1}_3(d) \to E^{3,2d-3}_3(d)\right\} \\
    &=H^3(G_k,H^{2d-3}_M(\Pro_{\bar{k}}^d,\Z(d)))/ \text{im}\left\{K_1^M(\bar{k})^{G_k}\to H^3(G_k,H^{2d-3}_M(\Pro_{\bar{k}}^d,\Z(d)))\right\}.
\end{align*}
Using the recursive formula $$ H^m_L(\mathbb{P}^n_{\bar{k}},\Z(n))\simeq H^m_L({\bar{k}},\Z(n))\oplus H^{m-2}_L(\mathbb{P}^{n-1}_{\bar{k}},\Z(n-1)).$$ 
 we can obtain easily 
\begin{align*}
    H^{2d-3}_M(\Pro_{\bar{k}}^d,\Z(d)) \simeq \begin{cases}
        0 \text{ if }d=1 \\
        H^1_M(\bar{k},\Z(2)) \text{ if }d=2 \\
         H^1_M(\bar{k},\Z(2)) \oplus K_3^M(\bar{k}) \text{ if }d\geq 3. 
    \end{cases}
\end{align*}

Again as in Example \ref{exam}, the group $\CH^d_L(X)$ fits into the following short exact sequence
\begin{align}\label{chd}
    0 \to  E^{3,2d-3}_\infty(d) \to \CH^d_L(X) \to E^{0,2d}_\infty(d) \to 0. 
\end{align}

We should emphasise that as we mentioned in Proposition \ref{rem}, the \'etale degree map factors through $E^{0,2d}_\infty(d)[1/p]$, having the following commutative diagram, which is obtain if we tensor the short exact sequence (\ref{chd}) by $\Z[1/p]$
\[  
    \begin{tikzcd}
0 \arrow{r} & E^{3,2d-3}_\infty(d)[1/p] \arrow{r} & \CH^d_\et(X) \arrow[swap]{dr}{\deg_\et}\arrow{r} &  E^{0,2d}_\infty(d)[1/p] \arrow{d}{\widetilde{\deg}}\arrow{r} & 0 \\
& & &  \Z[1/p], & 
  \end{tikzcd}
\]
and where $\widetilde{\deg}:E^{0,2d}_\infty(d)[1/p] \to \Z[1/p]$ is the composition of the following maps:
\begin{align*}
    E^{0,2d}_\infty(d)[1/p] \hookrightarrow \CH^d(\Pro^d_{\bar{k}})[1/p]^{G_k} \xrightarrow{\simeq} \CH^d(\Pro^d_{\bar{k}})[1/p] \xrightarrow{\deg} \Z[1/p].
\end{align*}

The assumption about the cohomological dimension of $k$ gives us that $E^{0,2d}_\infty(d)[1/p]\simeq E^{0,2d}_3(d)[1/p]$, thus we conclude that $I_\et(X)=\text{ord}([X])$.
\end{proof}

As we may expect, the \'etale index of a  product of Severi-Brauer is again bounded by the order of the Brauer class in $\text{Br}(k)$. For the sequel we denote $X^{\times n}:=\overbrace{X\times \ldots \times X}^{\text{n-times}}$

\begin{lemma}\label{lemBr}
    Let $X$ be a Severi-Brauer variety of dimension $d$ over a field $k$. Then there exists an exact sequence
\begin{align*}
    0 \to \text{Pic}(X\times X) \to \text{Pic}(\Pro^d_{\bar{k}} \times \Pro^d_{\bar{k}})^{G_k} \simeq  \Z \oplus \Z \xrightarrow{s} \text{Br}(k) \to \text{Br}(X \times X)
\end{align*}
where $s$ sends $(a,b) \mapsto (a+b)\left[X\right] \in \text{Br}(k)$ and $\left[X\right]$ is the Brauer class associated to $X$. In general for a product $X^{\times n}$ we then obtain an exact sequence
 \begin{align*}
    0 \to \text{Pic}(X^{\times n}) \to \text{Pic}(\Pro^d_{\bar{k}} \times \ldots \times \Pro^d_{\bar{k}})^{G_k} \simeq  \Z\oplus \ldots \oplus \Z \xrightarrow{s} \text{Br}(k) \to \text{Br}(X^{\times n})
\end{align*}
where $s$ sends $(a_1,\ldots,a_n) \mapsto \sum_{i=1}^n a_i\left[X\right] \in \text{Br}(k)$.
\end{lemma}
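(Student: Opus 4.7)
The plan is to mimic the derivation of the classical Lichtenbaum sequence (\ref{lich}), as in \cite[Theorem 5.4.10]{GS06}, by applying the Hochschild--Serre spectral sequence for étale cohomology with $\mathbb{G}_m$-coefficients to $X^{\times n}$ and then identifying the first non-trivial differential via naturality with respect to the coordinate projections. The main obstacle will be this last identification of the differential with the announced map $s$.

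First, I would set up the convergent Hochschild--Serre spectral sequence
\begin{align*}
E_2^{p,q} = H^p(G_k, H^q_{\et}(X^{\times n}_{\bar{k}}, \mathbb{G}_m)) \Longrightarrow H^{p+q}_{\et}(X^{\times n}, \mathbb{G}_m)
\end{align*}
(which exists by \cite[Th.~III.2.20]{Mil80}, as recalled in the proof of Proposition~\ref{proHS}) and extract its five-term exact sequence
\begin{align*}
0 \to E_2^{1,0} \to H^1_{\et}(X^{\times n}, \mathbb{G}_m) \to E_2^{0,1} \xrightarrow{d_2^{0,1}} E_2^{2,0} \to H^2_{\et}(X^{\times n}, \mathbb{G}_m).
\end{align*}

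Next, I would compute the relevant $E_2$-terms. Since $X^{\times n}_{\bar{k}} \simeq (\Pro^d_{\bar{k}})^{\times n}$ is proper and geometrically connected over $\bar{k}$, one has $H^0_{\et}(X^{\times n}_{\bar{k}},\mathbb{G}_m) = \bar{k}^*$; Hilbert~90 then yields $E_2^{1,0}=0$ and $E_2^{2,0}=\text{Br}(k)$. For the geometric Picard group I would iterate the Künneth formula for Picard groups of smooth proper varieties over an algebraically closed field; since $\text{Alb}(\Pro^d_{\bar{k}})=0=\text{Pic}^0(\Pro^d_{\bar{k}})$, all correction terms vanish and one obtains
\begin{align*}
H^1_{\et}(X^{\times n}_{\bar{k}},\mathbb{G}_m) = \text{Pic}\bigl((\Pro^d_{\bar{k}})^{\times n}\bigr) \simeq \bigoplus_{i=1}^n \pi_i^*\,\text{Pic}(\Pro^d_{\bar{k}}) \simeq \Z^{\oplus n},
\end{align*}
with $\pi_i:(\Pro^d_{\bar{k}})^{\times n}\to \Pro^d_{\bar{k}}$ the $i$-th projection. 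By Remark~\ref{remGI} this is a trivial $G_k$-module, so $E_2^{0,1}\simeq \Z^{\oplus n}$. Identifying $H^1_{\et}(X^{\times n},\mathbb{G}_m)=\text{Pic}(X^{\times n})$ and $H^2_{\et}(X^{\times n},\mathbb{G}_m)=\text{Br}(X^{\times n})$, the five-term sequence takes the form claimed in the lemma, up to identifying $d_2^{0,1}$ with $s$.

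Finally, I would pin down $d_2^{0,1}$ on each standard generator $e_i$ of $\Z^{\oplus n}$ by naturality. Each projection $p_i:X^{\times n}\to X$ is a $k$-morphism and hence induces a morphism of Hochschild--Serre spectral sequences compatible with $d_2^{0,1}$. By construction $e_i = p_i^* h$, where $h$ generates $\text{Pic}(\Pro^d_{\bar{k}})^{G_k}$ and satisfies $h \mapsto [X]$ under the classical map~(\ref{lich}). Since $p_i^*$ is the identity on $H^0_{\et}(-,\mathbb{G}_m)=\bar{k}^*$, it induces the identity on $E_2^{2,0}=\text{Br}(k)$. Consequently $d_2^{0,1}(e_i) = p_i^*([X]) = [X]$, and $\Z$-linearity gives $s(a_1,\ldots,a_n)=\sum_i a_i[X]$; the first statement of the lemma is the special case $n=2$.
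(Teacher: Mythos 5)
Your proposal is correct and takes essentially the same route as the paper: the Hochschild--Serre spectral sequence in weight one (the paper works with $H^*_L(-,\Z(1))$, which is your $\mathbb{G}_m$-cohomology shifted by one), the triviality of the $G_k$-action on $\text{Pic}\bigl((\Pro^d_{\bar{k}})^{\times n}\bigr)$ from Remark \ref{remGI}, and naturality of the spectral sequence along the projections to reduce the differential to the classical sequence (\ref{lich}) for $X$ itself. The only (harmless) variation is how the comparison on $E_2^{2,0}=\text{Br}(k)$ is pinned down: you observe directly that $p_i^*$ is the identity on $H^0(-,\mathbb{G}_m)=\bar{k}^*$, whereas the paper additionally invokes the diagonal $\Delta$ with $\text{pr}_i\circ\Delta=\text{id}$; your version is, if anything, slightly more direct.
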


\begin{proof}
Let $Y$ be a smooth projective variety over $k$. Considering the Hochschild-Serre spectral sequence (\ref{spectralfield})
\begin{align*}
    E^{r,s}_2(1) = H^r(G_k,H^s_L(Y_{\bar{k}},\Z(1))) \Longrightarrow H^{r+s}_L(Y,\Z(1))
\end{align*}
we obtain the following exact sequence $0 \to E^2_\infty(1) \to E^{0,2}_2(1) \to E^{2,1}_2(1) \to E^{3}_\infty(1)$. If $Y=X^{\times n}$ then $Y_{\bar{k}}\simeq \Pro^d_{\bar{k}} \times \ldots \times \Pro^d_{\bar{k}}$ and consequently $\text{Pic}(\Pro^d_{\bar{k}} \times \ldots \times \Pro^d_{\bar{k}})\simeq \Z \oplus \ldots \oplus \Z$ and by Remark \ref{remGI} we obtain an isomorphism $\text{Pic}(\Pro^d_{\bar{k}} \times \ldots \times \Pro^d_{\bar{k}})^{G_k} \simeq \Z \oplus \ldots \oplus \Z$ giving us the exact sequences of the statement. 

Now let us see the easiest case for $Y=X\times X$. Consider the arrows
\[ 
\begin{tikzcd}
X \arrow[r,"\Delta"]
& X \times X \arrow[r, shift left,"\text{pr}_1"]
\arrow[r, shift right, "\text{pr}_2", swap]
& X 
\end{tikzcd}
\]
where $\Delta: X \to X\times X$ is the diagonal embedding and $\text{pr}_i:X\times X \to X $ is the projection in the i-th component. Notice that the composition gives the identity on $X$. Notice that the morphism $\text{pr}_i:X\times X \to X $ induces a morphism 
\begin{align*}
    \text{pr}_i^*:H^m_L(X,\Z(n)) \to H^m_L(X\times X,\Z(n)) \hspace{2mm}\text{and}\hspace{2mm} \text{pr}_i^*:H^m_L(\Pro^d_{\bar{k}},\Z(n)) \to H^m_L(\Pro^d_{\bar{k}}\times \Pro^d_{\bar{k}},\Z(n))
\end{align*}
for all bi-degree $(m,n)$. By functoriality properties of the Hochschild-Serre spectral sequence we have a diagram
\[ 
\begin{tikzcd}
    0 \arrow{r}& \text{Pic}(X) \arrow{r} \arrow{d}{} & \Z \arrow{r} \arrow{d}{f} & \text{Br}(k) \arrow{r}\arrow{d}{\tilde{f}} & \text{Br}(X) \arrow{d}{}\\
    0 \arrow{r}& \text{Pic}(X\times X) \arrow{r} & \Z \oplus \Z \arrow{r}{s} & \text{Br}(k) \arrow{r}& \text{Br}(X \times X)
\end{tikzcd}
\]
where the vertical arrows are induced by $ \text{pr}_i^*$. The composition $\text{pr}_i\circ \Delta$ is the identity on $X$, thus $\text{id}^*=\Delta^*\circ \text{pr}_i^*$ therefore we obtain that the maps $f:\Z \to \Z \oplus \Z$ and  $\tilde{f}:\text{Br}(k) \to \text{Br}(k)$ are injective and then, the elements of the form $(a,0)$ and $(0,b)$ are sent through $s$ to $a\left[X \right]$ and $b\left[X \right] \in \text{Br}(k)$ respectively. For the general case, we use the arrows
\[ 
\begin{tikzcd}
X \arrow[r,"\tilde{\Delta}"]
& \overbrace{X  \times \ldots \times X}^{\text{n-times}} \arrow[r, shift left=3,"\text{pr}_1"]
\arrow[r, draw=none, "\vdots",shift right=2.5]
\arrow[r, shift right=3, "\text{pr}_n", swap]
& X 
\end{tikzcd}
\]
 where $\tilde{\Delta}$ is the n-diagonal morphism and $\text{pr}_i$ is the projection in the $i$-th component, then we conclude as in the case of $X\times X$.
\end{proof}

\begin{theorem}\label{teo}
    Let $k$ be a field and let $X$ be a Severi-Brauer variety over $k$ of dimension $d$. Then $I_\et(X^{\times n}) \geq I_\et(X) \geq \text{ord}([X])$.
\end{theorem}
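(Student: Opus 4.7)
The second inequality $I_\et(X)\geq \text{ord}([X])$ is precisely Theorem~\ref{teoSV}, so only $I_\et(X^{\times n})\geq I_\et(X)$ needs proof. By definition, $I_\et(Y)$ is the positive generator of $\text{Im}(\deg_\et^Y)\cap\Z\subseteq\Z$, so an inclusion of images $\text{Im}(\deg_\et^{X^{\times n}})\subseteq\text{Im}(\deg_\et^X)$ is enough: any subgroup of $I_\et(X)\Z$ is of the form $m\, I_\et(X)\Z$, whose generator is a positive multiple of $I_\et(X)$.

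To produce this inclusion, I would use the smooth projection $\pi:X^{\times n}\to X^{\times(n-1)}$ onto the first $n-1$ factors. Being the base change along $X^{\times(n-1)}\to\spc(k)$ of the structure morphism $X\to\spc(k)$, the map $\pi$ is smooth and proper of relative dimension $d$, so by the Gysin formalism for smooth proper morphisms in $\text{DM}_\et(k)$ (recalled at the beginning of Section~4) there is a pushforward
\[
\pi_*:\CH^{nd}_\et(X^{\times n})\to\CH^{(n-1)d}_\et(X^{\times(n-1)}).
\]
Let $p_m:X^{\times m}\to\spc(k)$ denote the structure morphism. The identity $p_n=p_{n-1}\circ\pi$, combined with functoriality of the Gysin pushforward, gives
\[
\deg_\et^{X^{\times n}}=\deg_\et^{X^{\times(n-1)}}\circ\pi_*,
\]
hence $\text{Im}(\deg_\et^{X^{\times n}})\subseteq\text{Im}(\deg_\et^{X^{\times(n-1)}})$. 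A straightforward induction on $n$ then yields $\text{Im}(\deg_\et^{X^{\times n}})\subseteq\text{Im}(\deg_\et^X)$, and therefore $I_\et(X^{\times n})\geq I_\et(X)$.

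I do not foresee a genuine obstacle: the only point requiring care is the existence and functoriality of the Gysin pushforward for smooth proper morphisms in $\text{DM}_\et(k)$, but this is a standard feature of the six-functor formalism of Ayoub and Cisinski--D\'eglise already invoked in the paper. In positive characteristic $p$, one works with $\Z[1/p]$-coefficients throughout and the same reasoning applies. Observe that this strategy does not use Lemma~\ref{lemBr} directly, although that lemma provides an alternative route via a Hochschild--Serre computation of $E^{0,2nd}_\infty$ for $X^{\times n}$ in the spirit of Proposition~\ref{PropSB}; one could even combine the projection argument with the closed-immersion pushforward along the diagonal $X\hookrightarrow X^{\times n}$ to upgrade the inequality to an equality $I_\et(X^{\times n})=I_\et(X)$.
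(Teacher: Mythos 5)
Your proposal is correct, but it proves the theorem by a genuinely different route than the paper. The paper attacks $X^{\times n}$ head-on with the Hochschild--Serre spectral sequence in weight $nd$: it computes $H^{2nd-1}_L((\Pro^d_{\bar{k}})^{\times n},\Z(nd))\simeq\bigoplus_{i=1}^n\bar{k}^*$, hence $E^{2,2nd-1}_2\simeq\bigoplus_{i=1}^n\text{Br}(k)$, and then uses Lemma~\ref{lemBr} together with a cup-product argument with $\delta=c_1(\mathcal{O}(1))^{nd-1}$ to identify the differential $d_2^{0,2nd}$, obtaining $E^{0,2nd}_3=\text{ord}([X])\Z$; since $\deg_\et$ factors through $E^{0,2nd}_\infty\subseteq E^{0,2nd}_3$ (Proposition~\ref{rem}), this gives $\deg_\et(\CH^{nd}_\et(X^{\times n}))\subseteq\text{ord}([X])\Z$ directly. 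Your argument instead is purely formal: functoriality of Gysin push-forwards for projective morphisms of smooth varieties in $\text{DM}_\et(k)$ gives $\deg_\et^{X^{\times n}}=\deg_\et^{X}\circ(\text{pr})_*$, hence an inclusion of images and $I_\et(X^{\times n})\geq I_\et(X)$, and then $I_\et(X)\geq\text{ord}([X])$ is quoted from Theorem~\ref{teoSV}. This buys two things: it is more elementary (no need for Lemma~\ref{lemBr} or the cup-product identification) and more general (it shows $I_\et(X\times Y)\geq I_\et(X)$ for arbitrary smooth projective $X,Y$); moreover it literally establishes the intermediate inequality $I_\et(X^{\times n})\geq I_\et(X)$, which the paper's computation only yields through the common lower bound $\text{ord}([X])$. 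What the paper's approach buys in exchange is the explicit description of the obstruction group $E^{0,2nd}_3$ inside the filtration of $\CH^{nd}_L(X^{\times n})$, which is exactly what is reused afterwards (as in Proposition~\ref{PropSB} and the $C\times C$ proposition) to prove the bound is attained. The only point your argument leans on is compatibility of push-forwards with composition, $(g\circ f)_*=g_*\circ f_*$, which is part of the Gysin formalism of D\'eglise and Cisinski--D\'eglise already invoked in Section 4, and your handling of positive characteristic via $\Z[1/p]$-coefficients matches the paper's conventions; your closing remark that the diagonal embedding would upgrade the statement to the equality $I_\et(X^{\times n})=I_\et(X)$ is also sound and is in fact a sharper conclusion than the theorem as stated.
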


\begin{proof}
Let us fix an integer $n\geq 1$, and let $X$ be a Severi-Brauer variety of dimension $d$ over $k$. We have that $X_{\bar{k}}\simeq \Pro_{\bar{k}}^d$, thus $(X^{\times n})_{\bar{k}}\simeq \Pro_{\bar{k}}^d \times \ldots \times \Pro_{\bar{k}}^d$. Considering the Hochschild-Serre spectral sequence (\ref{spectralfield}) for the Lichtenbaum cohomology groups of $X^{\times n}$, one has
    \begin{align*}
    E^{r,s}_2(nd) = H^r(G_k,H^s_L((\mathbb{P}^d_{\bar{k}})^{\times n},\Z(nd))) \Longrightarrow H^{r+s}_L(X^{\times n},\Z(nd)).
    \end{align*}
Notice that by the projective bundle formula (\ref{proBun}) for Lichtenbaum cohomology we have   
\begin{align*}
    H^{2nd-1}_L\left((\mathbb{P}^d_{\bar{k}})^{\times n},\Z(nd)\right)\simeq \bigoplus_{0\leq a_1,\ldots a_n\leq d }^n  H^{2nd-1-2\sum_{j=1}^n a_j}_L\left(\spc(\bar{k}),\Z\left(nd-\sum_{j=1}^n a_j\right)\right).
\end{align*}
By noticing that if $2nd-1-2\sum_{j=1}^d a_j>nd-\sum_{j=1}^n a_j$ then $H^{2nd-1-2\sum_{j=1}^n a_j}_L(\spc(\bar{k}),\Z(nd-\sum_{j=1}^n a_j))=0$, this give us a vanishing condition  for $nd-1>\sum_{j=1}^n a_j$. As $0 \leq a_j \leq d$ for all $j$, then the only n-tuples $(a_1,\ldots, a_n)$ which do not satisfy such condition are 
\begin{align*}
    \epsilon_i=(d,\ldots, d,\overbrace{d-1}^{i\text{-th pos.}},d,\ldots, d) \text{ for all $i$, and } (d,\ldots,d).
\end{align*}
 For such cases, if $a_j=d$ for all $j$ then  $H^{2nd-1-2nd}_L(\spc(\bar{k}),\Z(nd-nd))=H^{-1}_L(\spc(\bar{k}),\Z(0))=0$,
and if $(a_1,\ldots,a_n)=\epsilon_i$, then 
\begin{align*}
    H^{2nd-1-2\sum_{j=1}^n a_j}_L(\spc(\bar{k}),\Z(nd-\sum_{j=1}^n a_j))=H^1(\spc(\bar{k}),\Z(1))\simeq K^M_1(\bar{k})=\bar{k}^*
\end{align*}
thus $ H^{2nd-1}_L((\mathbb{P}^d_{\bar{k}})^{\times n},\Z(nd))\simeq \bigoplus_{i=1}^n \bar{k}^*$ and consequently $E^{2,2nd-1}_2(nd)\simeq \bigoplus_{i=1}^n  \text{Br}(k)$. We have that the term $E^{0,2nd}_3(nd)$ is isomorphic to $\text{ker}\left\{\CH^{nd}((\mathbb{P}^d_{\bar{k}})^{\times n})^{G_k} \xrightarrow{g}  \bigoplus_{i=1}^n  \text{Br}(k)\right\},$ and consider the element $\delta$ defined as $$\delta=c_1\left(\mathcal{O}_{\mathbb{P}^d_{\bar{k}}\times \ldots \times \mathbb{P}^d_{\bar{k}}}(1)\right)^{nd-1}=\sum_{\substack{a_1,\ldots,a_n\in \{d-1,d\}  \\ a_1+\ldots+ a_n=nd-1}} x_1^{a_1}\cdots x_n^{a_n}$$ 
and $x_i$ is the pull-back of the generator class of $\text{Pic}(\Pro^d_{\bar{k}})$ through the map $\text{pr}_i:X^{\times n}\to X$. The intersection product with $\delta$ defines morphisms
\begin{align*}
    \text{Pic}((\mathbb{P}^d_{\bar{k}})^{\times n}) \xrightarrow{ \cup \delta} \CH^{nd}((\mathbb{P}^d_{\bar{k}})^{\times n}) \hspace{2 mm} \text{ and} \hspace{2 mm} H^1_L((\mathbb{P}^d_{\bar{k}})^{\times n}),\Z(1)) \xrightarrow{ \cup \delta}H^{2nd-1}_L((\mathbb{P}^d_{\bar{k}})^{\times n}),\Z(nd)). 
\end{align*}
By the functoriality of the Hochschild-Serre spectral sequence we obtain a commutative diagram as follows:
\[ 
\begin{tikzcd}
     \text{Pic}((\mathbb{P}^d_{\bar{k}})^{\times n})^{G_k} \arrow{r}{s} \arrow{d} & \text{Br}(k)  \arrow{d}\\
    \CH^{nd}((\mathbb{P}^d_{\bar{k}})^{\times n})^{G_k} \arrow{r}{g} & \bigoplus_{i=1}^n  \text{Br}(k),
\end{tikzcd}
\]
where the vertical arrows are induced by $\delta$. According to Lemma \ref{lemBr} the arrow $s$ sends $\Z\oplus \ldots \oplus \Z \ni (\alpha_1,\ldots,\alpha_n) \mapsto \sum_{i=1}^d \alpha_i [X] \in \text{Br}(k)$. Notice that the arrow $x_i \mapsto x_1^d\cdots x_{i}^{d-1}\cdots x_n^d$ induces an isomorphism $\CH^1((\mathbb{P}^d_{\bar{k}})^{\times n})\simeq \CH^{nd-1}((\mathbb{P}^d_{\bar{k}})^{\times n})$  and that $\CH^{nd-1}((\mathbb{P}^d_{\bar{k}})^{\times n})\otimes H^1((\mathbb{P}^d_{\bar{k}})^{\times n},\Z(1)) \simeq  H^{2nd-1}_L((\mathbb{P}^d_{\bar{k}})^{\times n},\Z(nd))$ given by the map $(\alpha_1,\ldots,\alpha_n) \otimes \beta  \mapsto \beta(\alpha_1,\ldots,\alpha_n)$ which is the cup product. 

Therefore the arrow $g$ maps $ \CH^{nd}((\mathbb{P}^d_{\bar{k}})^{\times n})^{G_k} \ni a \mapsto (a[X],\ldots, a[X]) \in \text{Br}(k)$ giving us that $\text{ker}(g)=\text{ord}([X])\Z$. Since $E_\infty^{0,2nd}(nd) \hookrightarrow E_3^{0,2nd}(nd)=\text{ker}(g)$ and $\deg_\et$ factors through $E_\infty^{0,2nd}(nd)$ we conclude the proof.
\end{proof}

The natural question that arises is when this bound is reached; this is the case for the product $C\times C$ when $C$ is a smooth, geometrically connected curve of genus $0$ over a field $k$ such that $C_{\bar{k}}\simeq \mathbb{P}^1_{\bar{k}}$ as the following proposition shows:

\begin{prop}
    Let $k$ be a perfect field of characteristic $p\geq 0$ with Galois group $G_k$, and let $C$ be a smooth, geometrically connected curve of genus $0$ over the field $k$ such that $C_{\bar{k}}\simeq \mathbb{P}^1_{\bar{k}}$, then $I_\et(C\times C)= \text{ord}([C])$. 
\end{prop}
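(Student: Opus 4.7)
By Theorem \ref{teo} applied with $n=2$ we already know that $I_\et(C\times C)\geq I_\et(C)\geq \mathrm{ord}([C])$, so the entire task is to establish the reverse inequality: one must exhibit a zero cycle in $\CH^2_\et(C\times C)$ whose \'etale degree equals $n:=\mathrm{ord}([C])$. The plan is to construct such a cycle in the classical Chow group $\CH^2(C\times C)$ and then transport it along the comparison map, using the fact (already recorded in the paper) that push-forwards commute with the change of topology $\rho^{*}:\mathrm{DM}(k)\to \mathrm{DM}_\et(k)$, so that $\deg_\et$ agrees with the classical $\deg$ on the image of $\CH^{2}(C\times C)\to\CH^{2}_\et(C\times C)$.

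First I would produce a divisor class on $C$ of degree exactly $n$. Since $C_{\bar k}\simeq \mathbb{P}^{1}_{\bar k}$, the Lichtenbaum sequence (\ref{lich}) reads
\begin{equation*}
    0\to \mathrm{Pic}(C)\to \mathrm{Pic}(\mathbb{P}^{1}_{\bar k})^{G_k}=\Z\xrightarrow{\cdot [C]}\mathrm{Br}(k),
\end{equation*}
so the image of $\deg:\mathrm{Pic}(C)\to\Z$ is precisely $n\Z$. Choose any $D\in\mathrm{Pic}(C)$ with $\deg_{C}(D)=n$ (for example, $D=-K_{C}$ when $n=2$, since $\deg(-K_{C})=2$).

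Next, consider the zero cycle
\begin{equation*}
    \alpha := \mathrm{pr}_{1}^{*}(D)\cdot[\Delta]\in \CH^{2}(C\times C),
\end{equation*}
where $\Delta:C\hookrightarrow C\times C$ is the diagonal closed immersion. Since $\mathrm{pr}_{1}\circ\Delta=\mathrm{id}_{C}$, the projection formula gives
\begin{equation*}
    \alpha = \Delta_{*}(\Delta^{*}\mathrm{pr}_{1}^{*}(D)) = \Delta_{*}(D),
\end{equation*}
and therefore $\deg(\alpha)=\deg_{C}(D)=n$. Applying the comparison map $\CH^{2}(C\times C)\to \CH^{2}_\et(C\times C)$ and the compatibility of Gysin push-forwards with $\rho^{*}$ yields an element of $\CH^{2}_\et(C\times C)$ whose \'etale degree is $n\in\Z\subset\Z[1/p]$. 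Hence $n\in \deg_\et(\CH^{2}_\et(C\times C))\cap\Z$, so $I_\et(C\times C)$ divides $n$, and the equality $I_\et(C\times C)=\mathrm{ord}([C])$ follows from the lower bound of Theorem \ref{teo}.

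The only subtle point, rather than a genuine obstacle, is the verification that the comparison $\CH^{2}(C\times C)\to \CH^{2}_\et(C\times C)$ intertwines the classical degree with $\deg_\et$; this is a direct consequence of the functoriality of Gysin morphisms in $\mathrm{DM}_\et(k)$ already invoked in the commutative diagram of Proposition \ref{rem} and the remark following the definition of $\deg_\et$.
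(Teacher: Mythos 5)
Your proof is correct, but it takes a genuinely different route from the paper. The paper proves this proposition by running the Hochschild--Serre spectral sequence for $\CH^2_L(C\times C)$ in weight $2$: it computes the relevant $E_2$-terms via the projective bundle formula, identifies $E^{0,4}_\infty$ with $\ker\left\{\CH^2(\Pro^1_{\bar k}\times\Pro^1_{\bar k})^{G_k}\to \text{Br}(k)\oplus\text{Br}(k)\right\}$, and then, by a cup-product computation with $\delta=x+y$ together with Lemma \ref{lemBr}, shows that this differential sends $1\mapsto([C],[C])$, so that $E^{0,4}_\infty\simeq \text{ord}([C])\Z$ and the image of $\deg_\et$ is exactly $\text{ord}([C])\Z$. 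You instead take the lower bound $I_\et(C\times C)\geq \text{ord}([C])$ as a black box from Theorem \ref{teo} and prove the upper bound by exhibiting an explicit classical cycle: the sequence (\ref{lich}) gives a divisor $D\in\text{Pic}(C)$ of degree $n=\text{ord}([C])$ (indeed $-K_C$ works, since $n\mid 2$ for a conic), and $\Delta_*(D)\in\CH_0(C\times C)$ has degree $n$; the compatibility of push-forwards with $\rho^*$ recorded in Remark (1) after the definition of $\deg_\et$ then puts $n$ in the image of $\deg_\et$. Your argument is more elementary — it avoids identifying $E_\infty$-terms altogether — and it proves slightly more, namely that already the classical index satisfies $I(C\times C)=\text{ord}([C])$, so for this example the \'etale index is not a genuine refinement; the same construction works for $X^{\times n}$ whenever $X$ is a Severi--Brauer variety whose classical index equals $\text{ord}([X])$. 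What the paper's computation buys in exchange is structural information that your argument does not give: the explicit filtration $0\to E^{3,1}_\infty\to\CH^2_\et(C\times C)\to E^{0,4}_\infty\to 0$ and the precise description of the image of $\deg_\et$, in line with the general Severi--Brauer analysis of Theorem \ref{teoSV} and Proposition \ref{PropSB}.
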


\begin{proof}
    By our assumptions we have that $C_{\bar{k}}\simeq \mathbb{P}^1_{\bar{k}}$ then $(C\times C)_{\bar{k}} \simeq \mathbb{P}^1_{\bar{k}} \times \mathbb{P}^1_{\bar{k}}$. Considering the Hochschild-Serre spectral sequence (\ref{spectralfield}) for the groups $H^m_L(C\times C,\Z(2))$ one gets 
    \begin{align*}
    E^{r,s}_2(2) = H^r(G_k,H^s_L(\mathbb{P}^1_{\bar{k}}\times \mathbb{P}^1_{\bar{k}},\Z(2))) \Longrightarrow H^{p+q}_L(C\times C,\Z(2)).
    \end{align*}
    Since $H^m_L(\mathbb{P}^1_{\bar{k}}\times \mathbb{P}^1_{\bar{k}},\Z(2))\simeq H^m_M(\mathbb{P}^1_{\bar{k}}\times \mathbb{P}^1_{\bar{k}},\Z(2))$ for $m\leq 3$, using again the projective bundle formula for motivic cohomology we obtain that
    \begin{align*}
        H^3_L(\mathbb{P}^1_{\bar{k}}\times \mathbb{P}^1_{\bar{k}},\Z(2))&\simeq H^3_M(\mathbb{P}^1_{\bar{k}},\Z(2))\oplus H^1_M(\mathbb{P}^1_{\bar{k}},\Z(1)) \simeq  K_1(\bar{k})\oplus K_1(\bar{k}) \\
         H^2_L(\mathbb{P}^1_{\bar{k}}\times \mathbb{P}^1_{\bar{k}},\Z(2))&\simeq H^2_M(\mathbb{P}^1_{\bar{k}},\Z(2))\oplus H^0_M(\mathbb{P}^1_{\bar{k}},\Z(1))\simeq  K_2(\bar{k}) \\
         H^1_L(\mathbb{P}^1_{\bar{k}}\times \mathbb{P}^1_{\bar{k}},\Z(2))&\simeq H^1_M(\mathbb{P}^1_{\bar{k}},\Z(2))\simeq  H^1_M(\spc(\bar{k}),\Z(2)) \\
        H^0_L(\mathbb{P}^1_{\bar{k}}\times \mathbb{P}^1_{\bar{k}},\Z(2))&\simeq H^0_M(\mathbb{P}^1_{\bar{k}},\Z(2))\simeq  H^0_M(\spc(\bar{k}),\Z(2)).
    \end{align*}
As we have mentioned before, $H^0_M(\spc(\bar{k}),\Z(2))$ and $K_2(\bar{k})$ are uniquely divisible, then for $r>0$ we have $E^{r,0}_2(2)=E^{r,2}_2(2)=0$. Due to the compatibility of \'etale cohomology with colimits, and in particular with direct sums, so $E^{r,3}_2(2) \simeq H^r(G_k,\bar{k}^*)\oplus H^r(G_k,\bar{k}^*)$. In particular, notice that again Hilbert's theorem 90 gives us that $E^{1,3}_2(2)=0$ and by definition $E^{2,3}_2(2)\simeq \text{Br}(k)\oplus \text{Br}(k)$.

With all this information about the $E_2$-terms, we obtain the $E_\infty$-terms that are trivial $ E^{1,3}_\infty(2) = E^{2,2}_\infty(2)= E^{4,0}_\infty(2) =0$, and those that can be non-trivial
\begin{align*}
    E^{0,4}_\infty(2) &=\text{ker}\left\{\CH^2(\mathbb{P}^1_{\bar{k}}\times \mathbb{P}^1_{\bar{k}})^{G_k} \to \text{Br}(k)\oplus\text{Br}(k)\right\}, \\
    E^{3,1}_\infty(2) &= E^{3,1}_2(2)/\text{im}\left\{E^{0,3}_2(2) \to E^{3,1}_2(2)\right\}.
\end{align*}
We then obtain the following short exact sequence 
\[  
    \begin{tikzcd}
0 \arrow{r} & E^{3,1}_\infty(2)[1/p] \arrow{r} & \CH^2_\et(C \times C) \arrow[swap]{dr}{\deg_\et}\arrow{r} &  E^{0,4}_\infty(2)[1/p] \arrow{d}{\widetilde{\deg}}\arrow{r} & 0 \\
& & &  \Z[1/p] & 
  \end{tikzcd}
\]
where $\widetilde{\deg}:E^{0,4}_\infty(2)[1/p]\to \Z[1/p]$ is the composition of the following maps:
\begin{align*}
    E^{0,4}_\infty(2)[1/p] \hookrightarrow \CH^2(\Pro^1_{\bar{k}}\times \Pro^1_{\bar{k}})[1/p]^{G_k} \xrightarrow{\simeq} \CH^2(\Pro^1_{\bar{k}}\times \Pro^1_{\bar{k}})[1/p] \xrightarrow{\deg} \Z[1/p].
\end{align*}
Let us give more information about the term $E^{0,4}_\infty(2)$. Mimicking the proof of Theorem \ref{teo}, we have an isomorphism $\text{Pic}(\Pro^1_{\bar{k}}\times \Pro^1_{\bar{k}}) \simeq \Z[x]\oplus \Z[y]$ and let us consider the Chern class $\delta =c_1(\mathcal{O}_{\Pro^1_{\bar{k}}\times \Pro^1_{\bar{k}}}(1))=x+y$. Taking  the morphisms induced by the intersection product with $\delta$:
\begin{align*}
H^1_L(\Pro_{\bar{k}}^1\times \Pro_{\bar{k}}^1,\Z(1)) \xrightarrow{\cup \delta} H^3_L(\Pro_{\bar{k}}^1\times \Pro_{\bar{k}}^1,\Z(2)) \ \text{ and } \ \CH^1(\Pro_{\bar{k}}^1\times \Pro_{\bar{k}}^1) \xrightarrow{\cup \delta}   \CH^2(\Pro_{\bar{k}}^1\times \Pro_{\bar{k}}^1). 
\end{align*}
Consider the isomorphism
\begin{align*}
    \CH^1(\Pro_{\bar{k}}^1\times \Pro_{\bar{k}}^1) \otimes H^1_L(\Pro_{\bar{k}}^1\times \Pro_{\bar{k}}^1,\Z(1)) &\xrightarrow{\simeq} H^3_L(\Pro_{\bar{k}}^1\times \Pro_{\bar{k}}^1,\Z(2)) \\
    (a,b) \otimes \alpha  &\mapsto \alpha(a,b)
\end{align*}
induced by the cup product. Hence the cup product with the diagonal induces a map $\text{Br}(k) \to \text{Br}(k)\oplus \text{Br}(k)$ defined by $a \mapsto (a,a)$ and then we can deduce that $\CH^2(\Pro_{\bar{k}}^1\times \Pro_{\bar{k}}^1)^{G_k} \to \text{Br}(k)\oplus \text{Br}(k)$ sends the $1 \mapsto ([C],[C])$. Since $E^{0,4}_\infty(2)\simeq \text{ord}([C])\Z[1/p]$ we conclude that $I_\et(C\times C)= \text{ord}([C])$. 
\end{proof}

\begin{remark}
\begin{enumerate}
    \item By Theorem \ref{teo1} and Theorem \ref{teo2} with the \'etale degree map we can improve the existence of integral projectors of the Künneth decomposition. Even though by Theorem \ref{teo} there exists $X$ such that $I_\et(X)\neq 1$ we have that $I(X)\geq I_\et(X)$. If there exists an element $e \in \CH^d_\et(X)$ of \'etale degree 1, then we define 
    \begin{align*}
        p_0^\et(X)=\text{pr}_{1}^*(e)\cdot \text{pr}_{2}^*(X) \ \text{ and } \ p_{2d}^\et(X)=\text{pr}_{1}^*(X)\cdot \text{pr}_{2}^*(e)
    \end{align*}
where $\text{pr}_i:X \times X \to X$ is the projection to the i-th factor.
    
    \item  If $k$ is a field with $\text{Br}(k)=0$, then the Severi-Brauer varieties $X$ over $k$ split over it and then $I(X)=I_\et(X)=1$. Thus, as Theorem \ref{teo} shows, $\text{Br}(k)$ appears to be an obstruction to the existence of an \'etale zero cycle of degree 1.  
\end{enumerate}

\end{remark}

\printbibliography[title={Bibliography}]

\info
\end{document}